\documentclass[10pt]{elsarticle}
\usepackage{amssymb,amsmath,amsthm,bm}
\usepackage[left=3cm, right=3cm, top=3.5cm, bottom=3.5cm]{geometry} 
\usepackage{amsfonts}
\usepackage{epsfig}
\usepackage{amsbsy}
\usepackage{color}
\biboptions{sort&compress}
\usepackage{subfigure}
\usepackage{graphicx}
\graphicspath{{figures/}}

\newlength\imagewidth
 \newtheorem{lemma}{Lemma}[section]
 \newtheorem{theorem}{Theorem}[section]
 
 \newtheorem{definition}{Definition}[section]
\newtheorem{remark}{Remark}[section]

\usepackage{color}
\definecolor{dgreen}{rgb}{0,.6,0}

\usepackage[normalem]{ulem}
\begin{document}

\begin{frontmatter}
\title{Modulational instability of small amplitude periodic traveling
waves in the $b$-family of Novikov equation}

\author{Xin Zhao}
\address{ Hunan Provincial Key Laboratory of Mathematical Modeling and Analysis in Engineering,\\
School of Mathematics and Statistics, Changsha University of Science and Technology,\\
Changsha, 410114,  P. R. China}
\author{Lin Lu }
\cortext[mycorrespondingauthor]{Corresponding author.}
\address{ School of Mathematics and Statistics, \\ Hunan First Normal University,
Changsha, 410205,  P. R. China}
\author{Aiyong Chen \corref{mycorrespondingauthor}}
\ead{aiyongchen@163.com}
\address{ School of Mathematics and Statistics, \\ Hunan First Normal University,
Changsha, 410205,  P. R. China}

\begin{abstract}
We study the modulational instability of smooth, small-amplitude periodic traveling wave solutions to the $b$-family of Novikov equation with cubic nonlinearity with an arbitrary
coefficient $b>0$. Our approach is based on applying spectral perturbation theory to the corresponding linearization process. We derive a modulation instability index dependent on the nonlinear parameter $b$ and the fundamental wave number, and prove that when this index is negative, sufficiently small periodic traveling waves in the Novikov equation $b$-family exhibit spectral instability to long-wavelength perturbations. This confirms the well-known Benjamin-Feir instability in the $b$-family of Novikov equation. 
\end{abstract}

\begin{keyword}
$b$-family of Novikov equation; Modulational instability; Periodic traveling waves
\end{keyword}

\end{frontmatter}

\section{Introduction}
\numberwithin{equation}{section}

In this paper, we consider the $b$-family of Novikov ($b$-Novikov) equation \cite{b-novikov2013}
\begin{equation}\label{y1.3}
u_t-u_{xxt}=buu_xu_{xx}-(b+1)u^2u_x+u^2u_{xxx},\quad t>0, ~x\in\mathbb{R},
\end{equation}
where $b>0$. Note that when setting $b=3$, equation (\ref{y1.3}) becomes the following well-known Novikov equation \cite{novikov2009}
\begin{equation}\label{y1.2}
u_t-u_{xxt}=3uu_xu_{xx}-4u^2u_x+u^2u_{xxx},
\end{equation}
which was introduced by Novikov in a symmetry classification of nonlocal partial differential equations with cubic nonlinearity. Equation 
(\ref{y1.2}) can be regarded as a generalization to a cubic nonlinearity of the Camassa-Holm (CH) equation \cite{ch1981,ch1993}
\begin{equation}\label{ch}
u_t-u_{txx}=2u_xu_{xx}-3uu_x+uu_{xxx}
\end{equation}
and the Degasperis-Procesi (DP) equation \cite{dp1999}
\begin{equation}\label{dp}
u_t-u_{txx}=3u_xu_{xx}-4uu_x+uu_{xxx}.
\end{equation}
Constantin et al. \cite{Constantin2001PRS,Constantin2006IP,Constantin2009ARMA,Constantin1998AM,Constantin1998ASNSP}
 have conducted extensive research on the CH equation, covering  its scattering problem, inverse scattering transform,
  fluid dynamics correlation with the DP equation, as well as wave breaking, global existence and blow-up problems  of the equation.

All the three equations (\ref{y1.2})-(\ref{dp}) share many common properties. For instance, they are all completely integrable in the sense that they all have a Lax pair representation, a bi-Hamiltonian structure, and an infinite sequence of conservation laws. 
 Hone et al. \cite{Hone etal2009} calculated the explicit formulas for multi-peakon solutions of (\ref{y1.2}). Himonas and Holliman \cite{A. Himonas and C. Holliman} obtained that equation (\ref{y1.2}) is well-posed in Sobolev spaces $\mathrm{H}^s$ with $s>\frac{3}{2}$ on both the line and the circle with continuous dependence on initial data. In \cite{Grayshan2013,yan etal2013} the authors proved that the data-to-solution map is not globally uniformly continuous on $\mathrm{H}^s$ for $s<\frac{3}{2}$, this result supplements Himonas and Holliman’s works. Tiglay \cite{Tiglay2011} showed the local well-posedness of the problem in Sobolev spaces and the existence and uniqueness of solutions for all time using orbit invariants. For analytic initial data, the existence and uniqueness of analytic solutions for equation (\ref{y1.2}) were also obtained in \cite{Tiglay2011}. Analogous to the CH equation, the Novikov equation possesses a blow-up phenomenon \cite{Jiang2012} and global weak solutions \cite{wu2011}.  Palacios \cite{Palacios2020} proved the asymptotic stability of peakon solutions under perturbations satisfying that their associated momentum density defines a non-negative Radon measure. 

At the same time, equation (\ref{y1.3}) is a particular case (for
$k = 2$) of the generalized Camassa-Holm (gCH) equation \cite{gkbch2013}
\begin{equation}\label{gkbch}
u_{t}-u_{xxt}=u^{k}u_{xxx}+bu^{k-1}u_{x}u_{xx}-(b+1)u^{k}u_{x},\quad u=u(x,t).
\end{equation}
In \cite{gkbch2013}, it was proved among other things, that each member of the family of equations (\ref{gkbch}) 
possesses peakon travelling wave solutions, which on the line, have the form
\begin{equation}
u(x,t)=c^{\frac{1}{k}}e^{-|x-ct|},
\end{equation}
where $c\neq0$ is the velocity of the wave. Obviously, (\ref{y1.3}) admits the peakon solution
\begin{equation}
\tilde{u}(x,t)=\sqrt{c}e^{-|x-ct|}.
\end{equation}
  The local well posedness to the Cauchy problem of (\ref{y1.3}) in critical Besov spaces was studied in \cite{Zhou S2013}. Himonas and Holliman \cite{Himonas AA2022} analyzed the instability and non-uniqueness of the initial value problem for equation (\ref{y1.3}) on lines and circles when initial data belong to the Sobolev space $\mathrm{H}^s$ with $s < 3/2$. It proved that equation (\ref{y1.3}) is ill-posed for $s < 3/2$ when $b > 2$. This result was achieved by constructing a bimodal solution with arbitrarily small initial data that collides within an arbitrarily short time. In \cite{da Silva PL2015}, a complete group classification was carried out. From the Lie symmetry generators, the authors obtained the exact solutions and a nontrivial conservation law for equation (\ref{y1.3}). In \cite{Efstathiou2022}, analytic approximations of the peakon solution for equation (\ref{y1.3}) were obtained by applying the homotopy analysis method.  Furthermore, Deng  and Lafortune \cite{dengxijun2025} proved spectral and linear instability on $L^2(\mathbb{R})$ of peakons for equation (\ref{y1.3}).

Another equation strongly connected with (\ref{y1.3}) is the $b$-family equation  \cite{fanlili2025} 
\begin{equation}\label{b-eq}
u_{t}-u_{xxt}=uu_{xxx}+bu_{x}u_{xx}-(b+1)uu_{x},\quad u=u(x,t),
\end{equation}
which becomes the well-known CH equation when $b = 2$ and the DP equation when $b = 3$. In addition,  (\ref{b-eq}) can be deduced from (\ref{gkbch}) for $k = 1$. It is easy to see that (\ref{y1.3})  has nonlinear terms that are cubic, rather than quadratic of $b$-family equation. Equation (\ref{y1.3}) only differs from  equation (\ref{b-eq}) by a multiplying factor $u$ applied to the RHS. Recently, Fan  et al. \cite{fanlili2025} proved the modulational instability for the periodic traveling waves of equation (\ref{b-eq}), where the instability follows
from the presence of the spectrum of a linearized operator in the right half
plane of the complex plane.

Ehrman et al. \cite{novikov2025} studied the spectral stability of smooth, small-amplitude periodic traveling wave solutions to the Novikov equation.
 Their primary approach was to focus on the neighborhood of the origin in the spectral plane, conduct an in-depth analysis of
  the $L^2(\mathbb{R})$-spectral properties of the associated linearized operator, and use spectral perturbation theory to conclude that these small-amplitude periodic solutions exhibit spectral instability to long-wavelength perturbations when the wave number exceeds a critical value. 
  Conversely, the periodic solutions are spectrally stable when the wave number is below this critical threshold. 
   This study established a clear classification criterion for the spectral stability of such solutions, 
   thereby enriching the relevant results concerning small-amplitude periodic waves in the context of modulational instability for the Novikov equation. 
    Johnson and Oregero \cite{26} studied the nonlinear wave modulation phenomenon of periodic traveling wave solutions with arbitrary
     amplitude in the CH equation. Using the nonlinear WKB/multiple scales expansion method, they derived the Whitham modulation system for 
     the CH equation and established a rigorous connection between this system and the spectral stability of periodic wave trains with respect
      to localized perturbations.

Obviously, as a classical Camassa-Holm-type equation with cubic nonlinearity, the modulational instability of small-amplitude periodic traveling waves in the Novikov equation has been investigated. However, there are currently no relevant studies on the modulational instability of small-amplitude periodic traveling waves in the $b$-family of Novikov equation. This type of equation possesses both the cubic nonlinear characteristics of the Novikov equation and the parameter adjustability of the  $b$-family equation, and the laws governing the modulational stability of its periodic traveling waves as well as the synergistic influence mechanism between the parameter $b$ and the wave number remain to be further explored. 
Therefore, inspired by the aforementioned work, this paper focuses on the modulational instability of small-amplitude periodic 
traveling waves in equation (\ref{y1.3}), with the aim of linking the research on modulational instability 
 in the Novikov equation to the nonlinear parameter $b$ of the $b$-family equation, thereby further enriching the relevant research on modulational 
instability of Camassa-Holm-type equation and providing some ideas for the stability control of nonlinear wave system.

Modulational instability (also known as Benjamin-Feir instability), a classical stability problem of periodic traveling waves under long-wave perturbations, can be traced back to the 1960s, when Benjamin and Feir \cite{16,17} discovered that small-amplitude spatially periodic Stokes waves are unstable under long-wave perturbation. In 1995, Bridges and Mielke \cite{18} provided the first
  rigorous proof of the modulational instability of Stokes waves in the case of finite depth. In 2020, Nguyen and Strauss \cite{19} completed the proof for the infinite depth case. Berti et al. have further clarified the "figure-8" distribution of unstable eigenvalues in the cases of deep water \cite{20}, finite depth \cite{21} and critical depth \cite{22}. In addition to Stokes waves,
   modulational instability also occurs in various models, including quasi-periodic solutions and periodic standing waves of the nonlinear Schrödinger (NLS) equation \cite{23,24} and periodic standing waves of the generalized Korteweg-de Vries equation \cite{25}. Many scholars have conducted in-depth research on the modulation instability of periodic traveling wave solutions in several important models, such as Whitham equation \cite{27}, BBM and regularized Boussinesq-type equations \cite{28}, generalized BBM equation \cite{29}, 
   full-dispersion CH equation \cite{31}
   and KdV-type equations \cite{33}. Recent studies on the modulational instability of the generalized KdV equation \cite{34} have also yielded results similar 
 to those of Berti et al., namely that the unstable eigenvalues near zero exhibit a "figure-8" pattern.

To analyze modulational  instability of small amplitude periodic traveling waves of equation (\ref{y1.3}), we introduce some notational conventions and notations in the following. The space 
\(L^2(\mathbb{R})\) consists of all real or complex-valued, and Lebesgue-measurable function \(f \) on \(\mathbb{R}\) satisfying
\[\|f\|_{L^2(\mathbb{R})}=\left(\int_{\mathbb{R}}|f(x)|^2\mathrm{d}x\right)^{1/2}<+\infty.\]
Similarly, \(L^2(\mathbb{T})\) denotes the space of \(2\pi\)-periodic, real or complex-valued, measurable function  \(f \) on \(\mathbb{R}\) satisfying
\[\|f\|_{L^{2}(\mathbb{T})}=\left(\frac{1}{2\pi}\int_{-\pi}^{\pi}|f(x)|^{2}\mathrm{d}x\right)^{1/2}<+\infty\]
and
\(\| f \|_{L^\infty(\mathbb{T})} := \mathrm{ess\,sup}_{-\pi < z \leq \pi} |f(z)| < \infty\). Let \(H^1(\mathbb{T})\) consists of \(L^2(\mathbb{T})\) function whose derivative is in 
\(L^2(\mathbb{T})\). Let \(H^\infty(\mathbb{T}) = \bigcap_{k=0}^\infty H^k(\mathbb{T})\). 
For \(f \in L^1(\mathbb{T})\), the Fourier series of \(f \)  is defined as
$$\sum_{n\in\mathbb{Z}}\widehat{f}_{n}\mathrm{e}^{\mathrm{i}nz},$$
where  $\widehat{f}_{n}=\frac{1}{2\pi}\int_{-\pi}^{\pi}f(z)\mathrm{e}^{-\mathrm{i}nz}\mathrm{d}z.$
For \(f \in L^2(\mathbb{T})\), its Fourier series converges  to  \(f \) pointwise almost everywhere. The \(L^2(\mathbb{T})\)-inner product is given by
\[\langle f,g\rangle=\frac{1}{2\pi}\int_{-\pi}^{\pi}f(z)\bar{g}(z)\mathrm{d}z=\sum_{n\in\mathbb{Z}}\widehat{f}_{n}\overline{\widehat{g}}_{n}.\]
Finally, for operators $F$ and $G$ on a Hilbert space, their commutator is defined as\[[F,G]:=F\circ G-G\circ F.\]
The remainder of this paper is organized as follows. In Section 2, we employ the implicit function theorem and the Lyapunov-Schmidt reduction method to verify the existence of one-dimensional small-amplitude periodic traveling waves for the b-Novikov equation, whose parametric form is presented in Lemma 2.1. In Section 3, we perform a linearization of equation (\ref{y1.3}) around the acquired periodic travelling waves. Then we formulate the spectral stability problem and deliver the results of Bloch wave decomposition,  and we define modulational stability and modulational instability rigorously. In Section 4, we determine the spectrum of the unperturbed operator. In Section 5, we employ spectral perturbation theory to project the infinite-dimensional spectral problem onto the three-dimensional critical feature space related to modulational stability (or instability) under the small-amplitude limit, providing the proof of Theorem 5.1.

\section{Existence of small-amplitude periodic traveling wave solutions}
Taking traveling wave transformation \( u(x,t) = \varphi(x-ct) \), where \( c > 0 \) is the wave speed, then equation (\ref{y1.3}) becomes
\begin{equation}\label{2.1}
-c\varphi' + c\varphi''' = b\varphi\varphi'\varphi'' -(b+1)\varphi^2\varphi' +\varphi^2\varphi''' ,
\end{equation}
which is equivalent to
\begin{equation}\label{2.2}
(\varphi^2-c)(\varphi-\varphi^{\prime\prime})^{\prime}+b\varphi\varphi^{\prime}(\varphi-\varphi^{\prime\prime})=0.
\end{equation}
According to the fundamental theory of ordinary differential equations (ODEs), when \( \varphi^2(x) < c \) or \( \varphi^2(x) > c \)  holds for all \(x\in\mathbb{R}\), it necessarily follows that \(\varphi\in C^{\infty}(\mathbb{R})\). This paper restricts its focus to wave solutions satisfying
\begin{equation}\label{tj1}
\varphi^{2}(x)<c\text{ for all }x\in\mathbb{R}.
\end{equation}
 By multiplying (\ref{2.2}) by the integrating factor \((\varphi-\varphi^{\prime\prime})^{\frac{2-b}{b}}\), the equation (\ref{2.2}) can be expressed in  conservation form
\begin{equation}\label{y2.4}
\frac{d}{dx}\left(\left(\varphi-\varphi^{\prime\prime}\right)^{2/b}\left(c-\varphi^2\right)\right)=0.
\end{equation}
Additionally, it is assumed that
\begin{equation}\label{tj2}
\varphi-\varphi''>0 \text{ for all }x\in\mathbb{R},
\end{equation}
equation (\ref{y2.4}) is equivalent to
\begin{equation}\label{shs1}
\varphi-\varphi^{\prime\prime}=\frac{d}{(c-\varphi^2)^{b/2}},
\end{equation}
where  \(d > 0\) is a constant of integration. Multiplying by $\varphi'$ and integrating, equation (\ref{shs1}) can be transformed into the quadrature representation
\begin{equation}\label{mjcl}
\frac{1}{2}(\varphi^{\prime})^{2}=E+\frac{1}{2}\varphi^{2}-\int_{0}^{\varphi}\frac{d}{(c-\varphi^{2})^{\frac{b}{2}}} \mathrm{d} \varphi,
\end{equation}
where $E\in\mathbb{R}$ is another constant of integration. 
When $b=2,$ equation (\ref{mjcl}) can be explicitly written as
\begin{equation*}
\frac{1}{2}(\varphi^{\prime})^{2} - \frac{1}{2}\varphi^{2} + \frac{ d }{\sqrt{ c }} \mathrm{arctanh} (\frac{\varphi}{\sqrt{c}})=E.
\end{equation*}
When $b=3,$ equation (\ref{mjcl}) becomes
\begin{equation*}
\frac{1}{2}(\varphi^{\prime})^{2} - \frac{1}{2}\varphi^{2} + \frac{ d \varphi }{c \sqrt{ c- \varphi^2}} = E.
\end{equation*}
The periodic orbits determined by equation (\ref{shs1}) are shown in Figure \ref{phase-figure}.
  \begin{figure}[htbp]
    \centering
    \includegraphics[width=0.45\textwidth]{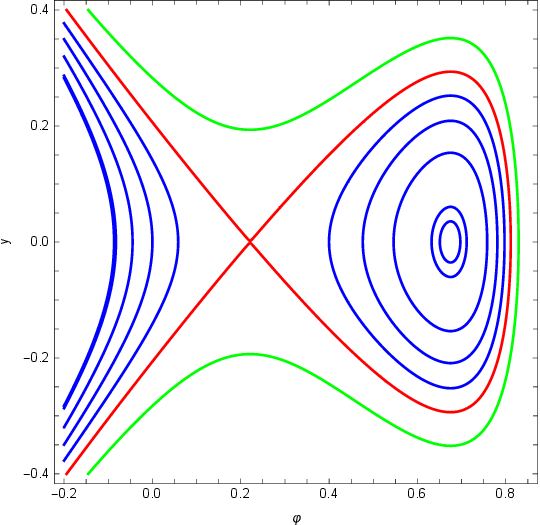}
    \caption {The periodic orbits determined by equation  (\ref{shs1}).}
    \label{phase-figure}
\end{figure}
According to the elementary phase plane analysis, as long as the effective potential function
\begin{equation}
V(\varphi;d,c)=\int_{0}^{\varphi}\frac{d}{(c-\varphi^{2})^{\frac{b}{2}}}\mathrm{d} \varphi-\frac{1}{2}\varphi^{2}
\end{equation}
possesses a strict local minimum within the interval $\varphi\in(-\sqrt{c},\sqrt{c})$, equation (\ref{2.1}) possesses a smooth periodic solution satisfying (\ref{tj1}) and (\ref{tj2}).
In fact, $$V^{\prime}(\varphi;d,c)= \frac{d}{(c-\varphi^2)^\frac{b}{2}} - \varphi,$$
where the symbol prime  represents taking the derivative of $\varphi$.
By direct calculation, we obtain 
\begin{equation}\label{V-Vprime-limV}
V(0;d,c)=0,~~~V^{\prime}(0;d,c)=\frac{d}{c^{\frac{b}{2}}}>0,
~~~\lim_{\varphi \rightarrow (\sqrt{c})^-} V (\varphi;d,c) = +\infty.
\end{equation}
Let
\begin{equation*}
D(\varphi;d,c):=d-\varphi(c-\varphi^{2})^{\frac{b}{2}}.
\end{equation*}
Then solving the equation 
$$\begin{aligned}
D^{\prime}(\varphi;d,c) =(c-\varphi^2)^{\frac{b-2}{2}}\big[(b+1)\varphi^{2}-c\big]=0
\end{aligned}$$ yields 
 $\varphi=\sqrt{\frac{c}{b+1}}$.
When $d>0$ and $c>(b+1)b^{-\frac{b}{b+1}}d^{\frac{2}{b+1}},$ we have
\begin{equation}\label{Vprimete}
V^{\prime}(\sqrt{\frac{c}{b+1}};d,c)=d (\frac{b+1}{bc})^\frac{b}{2} - \sqrt{\frac{c}{b+1}}<0.
\end{equation}
Moreover, since $$V^{\prime\prime}(\varphi;d,c)= \frac{d b \varphi }{(c-\varphi^2)^\frac{b+2}{2}} - 1,$$
we have
\begin{equation}\label{Vprimeprime1}
V^{\prime\prime}(0;d,c)= - 1 < 0,
~~~~~~~~~
 \lim_{\varphi \rightarrow (\sqrt{c})^-} V^{\prime\prime} (\varphi;d,c) = +\infty.
\end{equation}
Since the inflection points of $V$ are determined by the equation $V^{\prime\prime} (\varphi;d,c) = 0,$ 
that is
\begin{equation}\label{Vinflectionpoint}
 d^2 b^2 \varphi^2 = (c - \varphi^2)^{b+2},
 \end{equation}
 by using (\ref{V-Vprime-limV}), (\ref{Vprimete}), (\ref{Vprimeprime1}) and (\ref{Vinflectionpoint}),
  we obtain that there exists a unique inflection point $\varphi_0 \in (\sqrt{\frac{c}{b+1}},\sqrt{c})$ 
  such that 
$$V^{\prime\prime}(\varphi;d,c)<0~{\mathrm{for~all}}~|\varphi|<{\varphi_0},~~~ V^{\prime\prime}(\varphi;d,c)>0~\mathrm{for~all}~{\varphi_0}<\varphi<\sqrt{c}.$$
Therefore, the effective potential function possesses a non-degenerate strict local minimum at a certain point $w_{0}\in(\varphi_0,\sqrt{c})$. 
By elementary phase plane analysis, we can conclude that for every pair of positive parameters $d>0$ and $c>(b+1)b^{-\frac{b}{b+1}}d^{\frac{2}{b+1}}$,
 there exists a family of periodic solutions of (\ref{shs1}) that oscillate around the equilibrium solution $w_0$.
 The schematic diagram of the  function $V (\varphi; d, c)$ is shown in Figure \ref{Vfunction}.
\begin{figure}[htbp]
    \centering
    \includegraphics[width=0.45\textwidth]{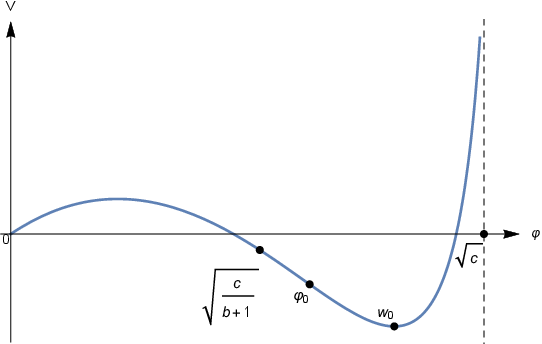}
    \caption {A schematic diagram of the potential function $V (\varphi; d, c)$ for  $d>0$ and $c>(b+1)b^{-\frac{b}{b+1}}d^{\frac{2}{b+1}}$.
   The horizontal coordinates of each black dot are marked in the graph.}
    \label{Vfunction}
\end{figure}

Let \( \varphi \) be a \( 2\pi/k \)-periodic function of its variable, for some wave number \( k > 0 \) . 
We introduce the scaling \( z = kx \) so that \( w(z) := \varphi(x) \) becomes a \( 2\pi \)-periodic function satisfying
\begin{equation}\label{y2.2}
-cw'+ck^2w'''=bk^2ww'w''-(b+1)w^2w'+k^2w^2w'''
\end{equation}
or, equivalently,
\begin{equation}\label{y-2.3}
(w-k^2w^{\prime\prime})(c-w^2)^{b/2}=d.
\end{equation}
Define  \( F: H^2(\mathbb{T}) \times \mathbb{R}_+ \times \mathbb{R}_+ \times \mathbb{R}_+ \to L^2(\mathbb{T}) \) by
\begin{equation*}
F(w;k,d,c)=(w-k^2w^{\prime\prime})(c-w^2)^{b/2}-d,
\end{equation*}
then solutions to equation (\ref{y2.2})  correspond to solutions \( w\) of
\begin{equation}\label{ODE2.4}
F(w;k,d,c)=0.
\end{equation}
Note that equation (\ref{ODE2.4}) remains invariant under the transformation \( z \to -z \) , \( z \to z+z_0 \) for all \( z_0\in\mathbb{R} \). Therefore, we assume that \( w \) is an even function with respect to \( z \). By the implicit function theorem, for some \( c = c_0 \), as long as the operator
\begin{equation*}
\partial_w F(w_0; k, d, c) =1-\frac{bw_0^2}{c-w_0^2} -k^2 \partial_z^2
\end{equation*}
fails to be an isomorphism from \( H^2(\mathbb{T}) \) to \( L^2(\mathbb{T}) \), then non-constant solutions of (\ref{ODE2.4}) may bifurcate from \( w = w_0 \).   We note that
\[
\partial_w F(w_0; k, d, c) \cos(nz) =\bigg(1-\frac{bw_0^2}{c-w_0^2} +k^2 n^2\bigg) \cos(nz).
\]
Thus, when
\begin{equation}\label{y-2.6}
c=c_0=\bigg(\frac{k^2+b+1}{k^2+1}\bigg)w_0^2,
\end{equation}
we have \( \cos(z) \in \ker\left(\partial_w F(w_0; k, d, c)\right) \).
Moreover, since the function
\[
n \mapsto 1-\frac{bw_0^2}{c-w_0^2} +k^2 n^2 \in \mathbb{R} \quad \text{for all } n \in \mathbb{N}
\]
is strictly increasing in \( n \), it follows that
\[ \ker\left(\partial_w F(w_0; k, d, c)\right) = \text{span}\{\cos(z)\}. \]
Also, the equilibrium solution $w_0$ satisfies
\begin{equation}\label{2-w0}
\begin{pmatrix}
c-w_0^2
\end{pmatrix}^{b/2}w_0=d,
\end{equation}
substituting $c = c_0$ yields a closed-form expression
\begin{equation}\label{y2.7}
w_0=d^{\frac{1}{b+1}}\bigg(\frac{b}{1+k^2}\bigg)^{-\frac{b}{2(b+1)}}
\end{equation}
and
\begin{equation}\label{y2.8}
c_0=d^{\frac{1}{b+1}}\bigg(\frac{b}{1+k^2}\bigg)^{-\frac{b}{2(b+1)}}\bigg(\frac{k^2+b+1}{k^2+1}\bigg).
\end{equation}
 Using the Lyapunov-Schmidt method, we construct a one-parameter family of non-constant, even and smooth solutions to (\ref{2.1}) near \( w = w_0(k,d) \) and \( c = c_0(k,d)\). Their small-amplitude expansions are given below.

\begin{lemma}
For each \( k > 0 \), \( d> 0 \), there exists a family of small-amplitude \( 2\pi/k \)-periodic traveling wave solutions to (\ref{y1.3}) of the form
\[
u(x,t;a,d) = w\bigl(k(x - c(k,a)t);a,d,k\bigr) 
\]
for $|a|\ll1,$ where \( w \) and \( c \) depend analytically on \( k \) and \( a \). The function \( w \) is a smooth, \( 2\pi \)-periodic and even function with respect to \( z \), and \( c \) is an even function with respect to \( a \). Moreover, as \( a \to 0 \), the following asymptotic expansion holds
\begin{equation}\label{y2.9}
w(z;a,d,k)=w_0(k,d)+a\cos z+a^2\left(e_1+e_2\cos2z\right)+O\left(a^3\right),
\end{equation}
\begin{equation}\label{y2.10}
c(a,d,k)=c_0(k,d)+a^2c_2+O\left(a^4\right),
\end{equation}
with
\begin{equation}\label{y2.11}
e_{1}=\frac{\big[-2(1+k^{2})^{2}+(-4-6k^{2}+k^{4})b+(-2-2k^{2}+k^{4})b^{2}\big]\big(\frac{b}{1+k^{2}}\big)^{\frac{-(2+b)}{2(1+b)}}}{24(1+b)d^\frac{1}{1+b}k^{2}},
\end{equation}
\begin{equation}
e_{2}=\frac{\big[(2+2b)+(4+3b)k^{2}+(2+b)k^{4}\big]\big(\frac{b}{1+k^{2}}\big)^{\frac{b}{2(1+b)}}}{12bd^{\frac{1}{1+b}}k^{2}},
\end{equation}
\begin{equation}\label{y2.12}
c_{2}=\frac{(10+8k^{2}-2k^{4})+(20+12k^{2}+k^{4})b+(10+4k^{2}+k^{4})b^{2}}{12b(1+b)},
\end{equation}
and \( w_0 \) and \( c_0 \) are given by (\ref{y2.7}) and (\ref{y2.8}) respectively. 
\end{lemma}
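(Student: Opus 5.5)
Work in the even subspace $H^2_{\rm e}(\mathbb{T})\to L^2_{\rm e}(\mathbb{T})$, since $F$ respects the reflection $z\mapsto -z$. We split $w=w_0+v$ and $c=c_0+\mu$, with $d$ and $k$ held fixed. By the computation preceding the lemma, $L:=\partial_wF(w_0;k,d,c_0)$ acts diagonally on $\cos(nz)$ with symbol $1-\frac{bw_0^2}{c_0-w_0^2}+k^2n^2$. By (\ref{y-2.6}) this equals $k^2(n^2-1)$. The kernel of $L$ is therefore ${\rm span}\{\cos z\}$, and $L$ is self-adjoint and Fredholm of index zero. Its range is the $L^2$-orthogonal complement of $\cos z$.

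We use the projection $\Pi$ onto $\cos z$ and write $v=a\cos z+\tilde v$ with $\Pi\tilde v=0$. The auxiliary equation $(I-\Pi)F(w_0+a\cos z+\tilde v;k,d,c_0+\mu)=0$ is solved for $\tilde v=\tilde v(a,\mu)$ by the implicit function theorem. This works because $(I-\Pi)L$ is invertible on $\{\Pi\tilde v=0\}$. Also $F$ is real-analytic in $(w,c)$ near $(w_0,c_0)$, since $c_0-w_0^2>0$ and $s\mapsto s^{b/2}$ is analytic for $s>0$; hence $\tilde v$ is analytic, and $\tilde v(0,\mu)=0$ comes from uniqueness.

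The bifurcation equation is $g(a,\mu):=\langle F(w_0+a\cos z+\tilde v;c_0+\mu),\cos z\rangle=0$. Since $g(0,\mu)\equiv 0$, we may write $g=a\,h(a,\mu)$. Here $h(0,0)=0$ and $\partial_\mu h(0,0)=\langle \partial_c\partial_wF\,\cos z,\cos z\rangle$. That quantity is $\tfrac12\partial_c\big(1-\tfrac{bw_0^2}{c-w_0^2}\big)\big|_{c_0}=\tfrac12\,\tfrac{bw_0^2}{(c_0-w_0^2)^2}\neq0$, up to the positive factor $(c_0-w_0^2)^{b/2}$ multiplying everything. The implicit function theorem then gives $\mu=\mu(a)$, analytic in $a$.

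The symmetry $z\mapsto z+\pi$ maps $a\cos z\mapsto -a\cos z$ and preserves evenness. Uniqueness of the reduced solution then gives $\mu(-a)=\mu(a)$ and $w(z+\pi;-a)=w(z;a)$, so $c$ is even in $a$. Smoothness of $w$ follows from elliptic bootstrapping in (\ref{y-2.3}), as $c-w^2>0$. Pulling back by $z=kx$ gives the traveling waves $u=w(k(x-ct))$ of (\ref{y1.3}).

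For the coefficients, we substitute $w=w_0+a\cos z+a^2w_2+a^3w_3+\dots$ and $c=c_0+a^2c_2+\dots$ into (\ref{y-2.3}). We Taylor-expand $(c-w^2)^{b/2}$ around $c_0-w_0^2=\frac{b w_0^2}{1+k^2}$, which follows from (\ref{y-2.6}). At order $a^2$ we get $Lw_2=$ (quadratic terms in $\cos z$), a combination of $1$ and $\cos 2z$. Inverting $L$ on these modes, with symbol $-k^2$ on the constant and $3k^2$ on $\cos2z$, gives $e_1$ and $e_2$; note $L$ has nonzero kernel only at $n=1$. At order $a^3$, the solvability condition that the $\cos z$ coefficient vanish determines $c_2$. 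We then rewrite everything using (\ref{y2.7}) to express $w_0$ through $d$ and $k$.

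The main obstacle is this last bookkeeping. We must expand $(c-w^2)^{b/2}$ to third order with general $b$, keeping the cross terms $-2w_0a\cos z$, $-a^2\cos^2z$, $-2w_0a^2w_2$, and $a^2c_2$. The result must be simplified to the stated rational forms, with the powers $(b/(1+k^2))^{\cdot}$ coming from $w_0$. We plan to factor out $w_0$ early and work in the normalized variable $w/w_0$, in which $c_0/w_0^2$ and $(c_0-w_0^2)/w_0^2$ are rational in $k^2$ and $b$. We will then check the resulting formulas against the Novikov case $b=3$ treated in \cite{novikov2025}.
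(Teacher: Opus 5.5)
Your plan follows the paper's proof. The paper only invokes Lyapunov--Schmidt and then does exactly your order-$a^2$/order-$a^3$ expansion of (\ref{y-2.3}), and carrying that bookkeeping out does give the stated $e_1$, $e_2$, $c_2$. The reduction you add, however, contains a false step. With $d$ fixed, the constant state moves with $c$: $F(w_0;k,d,c_0+\mu)=w_0(c_0+\mu-w_0^2)^{b/2}-d\neq 0$ for $\mu\neq0$, so $\tilde v=0$ does not solve the auxiliary equation at $a=0$. Uniqueness instead gives $\tilde v(0,\mu)=w_0(c_0+\mu)-w_0(c_0)$, where $w_0(c)$ is the constant solving (\ref{2-w0}). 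The IFT applies here since $c_0-(1+b)w_0^2=-\frac{bk^2}{1+k^2}w_0^2\neq0$, and it gives $w_0'(c_0)=\frac{1+k^2}{2k^2w_0}$. Because this constant solves $F=0$ outright, $g(0,\mu)\equiv0$ still holds. Your transversality number, however, is wrong. $\partial_\mu h(0,0)$ also contains $\langle\partial_w^2F(w_0;c_0)[w_0'(c_0),\cos z],\cos z\rangle=-\frac{(1+k^2)(1+b+k^2)}{2k^2}(c_0-w_0^2)^{\frac b2-1}$, so that
\[
\partial_\mu h(0,0)=\frac{1+k^2}{2}(c_0-w_0^2)^{\frac b2-1}\Big(1-\frac{1+b+k^2}{k^2}\Big)=-\frac{(1+b)(1+k^2)}{2k^2}(c_0-w_0^2)^{\frac b2-1}.
\]
This is $-(1+b)/k^2$ times your value. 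It is still nonzero, so the bifurcation argument survives once this is corrected.

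The same effect shows up in your expansion and changes your ordering. At order $a^2$ the constant mode carries the $c_2$ term, so $e_1=\frac{1}{k^2w_0}\big(\frac{(1+k^2)c_2}{2}-\frac{Q}{4b}\big)$ with $Q=(2+2b)+(4+3b)k^2+(2+b)k^4$. Its $c_2$-part is exactly $w_0'(c_0)\,c_2$, the shift of the constant state. So $e_1$ is not determined at order $a^2$. The $\cos z$-condition at order $a^3$ is linear in $c_2$, and its net coefficient (the direct term plus the contribution through $e_1$) is proportional to $1-\frac{1+b+k^2}{k^2}$. You must therefore solve the two orders jointly, as the paper does. Finally, when you convert via (\ref{y2.7}), note that (\ref{y2.8}) as printed equals $\frac{k^2+b+1}{k^2+1}w_0$, whereas (\ref{y-2.6}) gives $c_0=\frac{k^2+b+1}{k^2+1}w_0^2$; use the latter.
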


\begin{proof}
We discuss the small-amplitude expansion of equation (\ref{2.1}). Since \( w \) and \( c \) are analytic functions of \( a \) when \( |a| \) is small enough, and \( c \) is even in \( a \), then \( w \) and \( c \) can be written as
\begin{equation}\label{w2.13}
w(z;a,d,k) = w_0(k, d) + a \cos z + a^2 w_2(z) + a^3 w_3(z) + O(a^4)
\end{equation}
and
\begin{equation}\label{c2.14}
c(a,d,k) = c_0(k, d) + a^2 c_2 + O(a^4)
\end{equation}
as \( a \to 0 \). Here, \( w_2, w_3 \) are even and \(2\pi\)-periodic in $z$. Substituting (\ref{w2.13}) and (\ref{c2.14}) 
 into (\ref{y-2.3}) .
  At the order  $a^2$,  we get
\begin{equation*}\begin{aligned}
&-\frac{1}{2b}d^{-\frac{1}{1+b}}\bigg(\frac{bd^{\frac{2}{1+b}}\big(\frac{b}{1+k^{2}}\big)^{-\frac{b}{1+b}}}{1+k^{2}}\bigg)^{\frac{b}{2}}\bigg[-bc_{2}(1+k^{2})\bigg(\frac{b}{1+k^{2}}\bigg)^{\frac{b}{2(1+b)}}\\
&+\big(2+2b+4k^2+3bk^2+2k^4+bk^4\big)\bigg(\frac{b}{1+k^{2}}\bigg)^{\frac{b}{2(1+b)}}\cos^{2}z+2bd^{\frac{1}{1+b}}k^{2}\big(w_{2}+w_{2}^{''}\big)\bigg]=0,
\end{aligned}\end{equation*}
which is equivalent to
\begin{equation}\begin{aligned}\label{2-w2}
2bd^{\frac{1}{1+b}}k^{2}(w_{2}+w_{2}^{''})&= bc_{2}(1+k^{2})\bigg(\frac{b}{1+k^{2}}\bigg)^{\frac{b}{2(1+b)}}\\
&\quad -\bigg[(2+2b)+(4+3b)k^2+(2+b)k^4\bigg]\bigg(\frac{b}{1+k^{2}}\bigg)^{\frac{b}{2(1+b)}}\cos^{2}z\\
& =bc_{2}(1+k^{2})\bigg(\frac{b}{1+k^{2}}\bigg)^{\frac{b}{2(1+b)}}\\
&\quad -\frac{1}{2}\bigg[(2+2b)+(4+3b)k^2+(2+b)k^4\bigg]\bigg(\frac{b}{1+k^{2}}\bigg)^{\frac{b}{2(1+b)}}(1+\cos2z).
\end{aligned}\end{equation}
We assume that
$$w_2=e_1+e_2\cos2z,$$
combining this with the equation (\ref{2-w2}) yields 
  $$\begin{cases}
2b d^{\frac{1}{1+b}}k^{2}e_{1}=bc_{2}(1+k^{2})\big(\frac{b}{1+k^{2}}\big)^{\frac{b}{2(1+b)}}-\frac{1}{2}\big[(2+2b)+(4+3b)k^{2}+(2+b)k^{4}\big]\big(\frac{b}{1+k^{2}}\big)^{\frac{b}{2(1+b)}} \\
-6bd^{\frac{1}{1+b}}k^{2}e_{2}=-\frac{1}{2}\big[(2+2b)+(4+3b)k^{2}+(2+b)k^{4}\big]\big(\frac{b}{1+k^{2}}\big)^{\frac{b}{2(1+b)}} & 
\end{cases},$$
we get
$$e_{2}=\frac{\big[(2+2b)+(4+3b)k^{2}+(2+b)k^{4}\big]\big(\frac{b}{1+k^{2}}\big)^{\frac{b}{2(1+b)}}}{12bd^\frac{1}{1+b}k^{2}}$$
and
$$e_{1}=\frac{c_2(1+k^{2})(\frac{b}{1+k^{2}})^{\frac{b}{2(1+b)}}}{2d^{\frac{1}{1+b}}k^{2}}-\frac{[(2+2b)+(4+3b)k^{2}+(2+b)k^{4}](\frac{b}{1+k^{2}})^{\frac{b}{2(1+b)}}}{4bd^{\frac{1}{1+b}}k^{2}}.$$
 At the order  $a^3$,
\begin{equation*}
\begin{aligned}
&\frac{1}{3b^2} d^{-\frac{2}{1+b}} \left[ d^{\frac{2}{1+b}} \left( \frac{b}{1+k^2} \right)^{\frac{1}{1+b}} \right]^{\frac{b}{2}} \Bigg\{ (3b + 6bk^2 + 3bk^4) c_2 \left( \frac{b}{1+k^2} \right)^{\frac{b}{1+b}} \cos z \\
&+ \left[ (-4 - 3b + b^2) + (-12 - 6b + 3b^2)k^2 + (-12 - 3b + 3b^2)k^4 + (-4 + b^2)k^6 \right] \left( \frac{b}{1+k^2} \right)^{\frac{b}{1+b}} \cos^3 z \\
&- 6b d^{\frac{1}{1+b}} \left( \frac{b}{1+k^2} \right)^{\frac{b}{2(1+b)}} (1+k^2)(1+b+k^2) \cos z \, w_2+ 3b^2 d^{\frac{1}{1+b}} k^2 (1+k^2) \left( \frac{b}{1+k^2} \right)^{\frac{b}{2(1+b)}} \cos z \, w_2'' \\
&- 3b^2 d^{\frac{2}{1+b}} k^2 (w_3 + w_3'') \Bigg\} = 0,
\end{aligned}
\end{equation*}
which is equivalent to
\begin{equation}
\begin{aligned}\label{2-w3}
& \big(3b + 6bk^2 + 3bk^4\big) c_2 \left( \frac{b}{1+k^2} \right)^{\frac{b}{1+b}} \cos z+ \bigg[ (-4 - 3b + b^2) + (-12 - 6b + 3b^2)k^2  \\
&+ (-12 - 3b + 3b^2)k^4 + (-4 + b^2)k^6 \bigg] \left( \frac{b}{1+k^2} \right)^{\frac{b}{1+b}}\bigg (\frac{1}{4}\cos3z+\frac{3}{4}\cos z\bigg)\\
& - 6b d^{\frac{1}{1+b}} \left( \frac{b}{1+k^2} \right)^{\frac{b}{2(1+b)}} (1+k^2)(1+b+k^2) \cos z (e_1+e_2\cos2z) \\
& -12e_2b^2 d^{\frac{1}{1+b}} k^2 (1+k^2) \left( \frac{b}{1+k^2} \right)^{\frac{b}{2(1+b)}} \cos z \cos2z = 3b^2 d^{\frac{2}{1+b}} k^2 (w_3 + w_3'')  .\end{aligned}
\end{equation}
Noting that
\[\cos3\alpha=2\cos2\alpha\cos\alpha-\cos\alpha,\]
combining this with the equation (\ref{2-w3}) yields 
\begin{equation*}\begin{aligned}
&\frac{(2+2b)+(4+3b)k^{2}+(2+b)k^{4}}{k^{2}}(1+k^{2})\bigg(\frac{1+b+k^{2}}{2}+bk^{2}\bigg)\\ & =2(3b+6bk^{2}+3bk^{4})c_{2}-6b(1+k^{2})^{2}(1+b+k^{2})\frac{c_{2}}{k^{2}} \\
 & \quad+\frac{3}{2}\big[(-4-3b+b^{2})+(-12-6b+3b^{2})k^{2}+(-12-3b+3b^{2})k^{4}+(-4+b^{2})k^{6}\big] \\
 & \quad +3(1+k^{2})(1+b+k^{2})\frac{(2+2b)+(4+3b)k^{2}+(2+b)k^{4}}{k^{2}},
\end{aligned}\end{equation*}
thus, we have
$$c_{2}=\frac{(10+8k^{2}-2k^{4})+(20+12k^{2}+k^{4})b+(10+4k^{2}+k^{4})b^{2}}{12b(1+b)}$$
and
$$e_{1}=\frac{\big[-2(1+k^{2})^{2}+(-4-6k^{2}+k^{4})b+(-2-2k^{2}+k^{4})b^{2}\big]\big(\frac{b}{1+k^{2}}\big)^{-\frac{(2+b)}{2(1+b)}}}{24(1+b)d^\frac{1}{1+b}k^{2}}.$$
\end{proof}

\section{Spectral stability and spectral decomposition}

In this section, let \( w(z;a,d,k) \) with \(d,k>0\) and \(|a| \ll 1\) be a small-amplitude, $2\pi$-periodic travelling wave solution to the equation (\ref{y1.3}), whose existence was guaranteed by Lemma 2.1. Linearising  equations (\ref{y1.3}) at its one-dimensional periodic travelling wave solution \( w \) (given by (\ref{y2.9})), and considering the perturbation term \( w + \varepsilon V(z,t) \), yields the equation 
\begin{equation}
V_t=k\left(1-k^2\partial_z^2\right)^{-1}\mathcal{L}[w]V,
\end{equation}
where
\begin{equation}\begin{aligned}
\mathcal{L}[w] & :=c\partial_{z}-ck^{2}\partial_{z}^{3}-2(1+b)ww_{z}-(1+b)w^{2}\partial_{z} \\
 & +bk^2(w_zw_{zz}+ww_{zz}\partial_z+ww_z\partial_z^2)+k^2\left(w^2\partial_z^3+2ww_{zzz}\right).
\end{aligned}\end{equation}
Setting \( V(z, t) = e^{\lambda t} v(z) \) with \( \lambda \in \mathbb{C} \), we derive the spectral problem
\begin{equation}\label{y3.1}
\begin{aligned}
\lambda v & =k\left(1-k^2\partial_z^2\right)^{-1}\mathcal{L}[w] v:=P[w]v .
\end{aligned}
\end{equation}

\begin{definition} 
(Spectral Stability). For a \( 2\pi/k \)-periodic traveling wave solution \( u(x, t) = w(k(x - ct)) \) of (\ref{y1.3}), where \( w \) and \( c \) are given by (\ref{y2.9}) and (\ref{y2.10}) respectively, the periodic wave is said to be spectrally unstable if the \( L^2(\mathbb{R}) \)-spectrum of the operator \( P[w]\) intersects the open right half-plane of \( \mathbb{C} \). Otherwise, it is spectrally stable.
\end{definition}
 since (\ref{y3.1}) is invariant with respect to the transformations
$$v\mapsto\overline{v}\quad\mathrm{and}\quad\lambda\mapsto\overline{\lambda}$$
and
$$z\mapsto-z\quad\mathrm{and}\quad\lambda\mapsto-\lambda,$$
it follows that the spectrum of \( P[w] \) is symmetric with respect to both the real and imaginary axes. Consequently, the wave \( w \) is spectrally stable if and only if the entire \(L^2(\mathbb{R}) \)-spectrum of \( P[w] \) lies on the imaginary axis. Otherwise, it is spectrally unstable. Since the coefficients of \( P[w] \)  are periodic functions, we use Floquet theory and perform a Bloch wave decomposition. All solutions to (\ref{y3.1}) in \( L^2(\mathbb{R}) \) can be expressed as
\[
v(z) = e^{i\xi z} \phi(z),
\]
where \( \xi \in (-\frac{1}{2}, \frac{1}{2}] \) and \( \phi(z) \) is a \( 2\pi \)-periodic function. In fact,  \( \lambda \in \mathbb{C} \) belongs to the \( L^2(\mathbb{R}) \)-spectrum of \( P[w]  \) if and only if there exists \( \xi \in (-\frac{1}{2}, \frac{1}{2}] \) such that the system
\[
\begin{cases} 
P[w]v = \lambda v \\
v(z + 2\pi) = e^{2\pi i\xi} v(z)
\end{cases}
\]
has a non-trivial solution. Equivalently, this holds if and only if there exist \( \xi \in (-\frac{1}{2}, \frac{1}{2}] \) and a non-trivial \( \phi \in L^2(\mathbb{T}) \) satisfying
\begin{equation}\label{pwt}
    \lambda \phi = e^{-i\xi z} P[w]e^{i\xi z} \phi =: P_\xi[w] \phi.
\end{equation}
Thus, we obtain
\begin{equation}\begin{aligned}
P_{\xi}[w] & =k(1-k^{2}(\partial_{z}+i\xi)^{2})^{-1}\\
 & \quad\big[c(\partial_{z}+i\xi)-ck^{2}(\partial_{z}+i\xi)^{3}-2(1+b)ww_{z}-(1+b)w^{2}(\partial_{z}+i\xi)+\\
 & \quad bk^{2}w_{z}w_{zz}+bk^{2}ww_{zz}(\partial_{z}+i\xi)+bk^{2}ww_{z}(\partial_{z}+i\xi)^{2}+k^{2}w^{2}(\partial_{z}+i\xi)^{3}+2k^{2}ww_{zzz}\big].
\end{aligned}\end{equation}
Moreover, we have the spectral decomposition
\[
\sigma(P[w]) = \bigcup_{\xi \in (-\frac{1}{2}, \frac{1}{2}]} \sigma(P_\xi[w]).
\]
The family of one-parameter operators \(\{P_\xi[w] \}\) defined by \(\xi \in (-\frac{1}{2}, \frac{1}{2}]\) is termed the Bloch operators associated with \( P[w] \), where \(\xi\) is referred to as the Bloch frequency. This enables us to transform the spectral analysis of \(\{P[w] \}\) in \(L^2(\mathbb{R})\) into the study of the spectrum of the Bloch operator family \(\{P_\xi[w] \}\) in \(L^2(\mathbb{T})\).

\begin{lemma} (Symmetry Property). Let \( \xi \in (-\frac{1}{2}, \frac{1}{2}] \). For the Bloch operator \( P_\xi[w] \) acting on the space \( L^2(\mathbb{T}) \), its spectrum satisfies 
\[
\sigma(P_\xi[w] ) =\overline{ \sigma(P_{-\xi}[w] )} = -\sigma(P_{-\xi}[w] ) = -\overline{ \sigma(P_{\xi}[w] )}. 
\]
\end{lemma}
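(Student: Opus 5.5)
We will prove the two basic identities
\[
\sigma(P_\xi[w])=\overline{\sigma(P_{-\xi}[w])}
\qquad\text{and}\qquad
\sigma(P_\xi[w])=-\sigma(P_{-\xi}[w]).
\]
The remaining equality in the statement then follows by combining them: $-\sigma(P_{-\xi}[w])=-\overline{\sigma(P_{\xi}[w])}$ is obtained by applying the first identity with $\xi$ replaced by $-\xi$. Both identities come from the two invariances of (\ref{y3.1}) already noted, namely complex conjugation and the reflection $z\mapsto -z$. Here they are transported to the Bloch level through conjugation by explicit isometries of $L^2(\mathbb{T})$. The profile $w$ is real valued, smooth, $2\pi$-periodic and, by Lemma 2.1, even in $z$. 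Consequently $w^2$ and $ww_{zz}$ are even, while $w_z$, $ww_z$, $w_zw_{zz}$ and $ww_{zzz}$ are odd.

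\textbf{Conjugation.} Let $C\phi=\bar\phi$, an antilinear isometric involution of $L^2(\mathbb{T})$. Since every coefficient of $P_\xi[w]$ is real, and $\overline{(\partial_z+i\xi)\phi}=(\partial_z-i\xi)\bar\phi$, we get $C P_\xi[w] C=P_{-\xi}[w]$. This includes the resolvent factor, because
\[
C\,(1-k^2(\partial_z+i\xi)^2)^{-1}C=(1-k^2(\partial_z-i\xi)^2)^{-1}.
\]
This step uses that $1-k^2(\partial_z\pm i\xi)^2$ has Fourier symbol $1+k^2(n\pm\xi)^2\ge 1$, so it is invertible. Now suppose $P_\xi[w]-\lambda$ is boundedly invertible. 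Then conjugating by $C$ shows that $P_{-\xi}[w]-\bar\lambda$ is invertible too. Hence $\sigma(P_{-\xi}[w])=\overline{\sigma(P_\xi[w])}$.

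\textbf{Reflection.} Let $(R\phi)(z)=\phi(-z)$, a linear isometric involution. Then $R\partial_zR=-\partial_z$, so $R(\partial_z+i\xi)R=-(\partial_z-i\xi)$, and multiplication by an even (odd) function commutes (anticommutes) with $R$. We check each term of the bracket defining $P_\xi[w]$ and find that each one changes sign and has $\xi$ replaced by $-\xi$. For example:
\begin{itemize}
\item $c(\partial_z+i\xi)\mapsto -c(\partial_z-i\xi)$, and similarly $-ck^2(\partial_z+i\xi)^3\mapsto ck^2(\partial_z-i\xi)^3$;
\item $-2(1+b)ww_z$ is odd, so it flips sign;
\item $bk^2ww_{zz}(\partial_z+i\xi)$ is an even function times an odd-order operator, so it flips sign;
\item $bk^2ww_z(\partial_z+i\xi)^2$ is an odd function times an even-order operator, so it flips sign;
\item $k^2w^2(\partial_z+i\xi)^3$ is an even function times an odd-order operator, so it flips sign;
\item $2k^2ww_{zzz}$ and $bk^2w_zw_{zz}$ are odd, so they flip sign.
\end{itemize}
The prefactor $(1-k^2(\partial_z+i\xi)^2)^{-1}$ maps to $(1-k^2(\partial_z-i\xi)^2)^{-1}$ without a sign change. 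Therefore $RP_\xi[w]R=-P_{-\xi}[w]$, which gives $\sigma(P_\xi[w])=-\sigma(P_{-\xi}[w])$.

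\textbf{Main obstacle.} The only delicate point is the careful parity bookkeeping in the reflection step, term by term as above. A secondary technical point is that these are unbounded operators on $L^2(\mathbb{T})$ with domain $H^1(\mathbb{T})$: $P_\xi[w]$ is first order after the smoothing factor, since the $\partial_z^3$ terms are balanced by $(1-k^2(\partial_z+i\xi)^2)^{-1}$. We will note that $C$ and $R$ preserve this common domain, so the conjugation identities hold as operator equalities with equal domains. Spectra are then preserved, with $\lambda\mapsto\bar\lambda$ in the antilinear case and $\lambda\mapsto-\lambda$ in the reflection case.
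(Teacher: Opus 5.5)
Your proof is correct and follows the paper's argument: the paper justifies this lemma only by noting that (\ref{y3.1}) is invariant under $v\mapsto\bar v,\ \lambda\mapsto\bar\lambda$ and under $z\mapsto -z,\ \lambda\mapsto-\lambda$, and your identities $CP_\xi[w]C=P_{-\xi}[w]$ and $RP_\xi[w]R=-P_{-\xi}[w]$ are exactly these invariances at the Bloch level, since each sends $e^{i\xi z}\phi$ to a function of Bloch type $-\xi$. What you add is the explicit parity bookkeeping and the domain check; the one term you did not list, $-(1+b)w^2(\partial_z+i\xi)$, is handled exactly like $k^2w^2(\partial_z+i\xi)^3$.
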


From the above lemma, the spectrum of \( P_\xi[w] \) is symmetric with respect to the imaginary axis. Therefore, it suffices to consider the case \( \xi \in [0, \frac{1}{2}] \). 

\begin{definition}  (Modulational Stability). Suppose that there exists a neighborhood \( B \subset \mathbb{C} \) of the origin and a positive constant \( \xi_0 > 0 \) such that for \( |\xi| < \xi_0 \), the periodic traveling wave solution \( w(z; a,d,k) \) of equation (\ref{y1.3}) satisfies
\[
\sigma(P_\xi[w]) \cap B \subseteq \mathbb{R}i,
\]
then the solution is said to be modulationally stable. Otherwise, it is modulationally unstable.
\end{definition} 

In other words, if \( w \)  maintains spectral stability within a sufficiently small neighborhood around the origin, it is said to possess modulation stability. It should be specifically noted that while modulational instability invariably causes spectral instability, modulational stability does not necessarily indicate spectral stability, as additional instability types may exist in regions far away from the origin.

\section{ Unperturbed operators}

In the following, we determine the modulational stability of specific periodic traveling waves \( w \) by analyzing the spectral properties of the associated Bloch operator \( P_\xi[w] \), where both \( |a| \) and \( |\xi| \) are very small. Based on perturbation theory, we treat the operator \( P_\xi[w] \) as a perturbed form of a constant-coefficient operator \( P_\xi[w_0] \). In fact, we can naturally derive the estimate
\begin{equation}
\|P_\xi[w]-P_\xi[w_0]\|_{H^{1}(\mathbb{T})\to L^{2}(\mathbb{T})}=O(|a|),\quad\mathrm{as}\quad a\to0.
\end{equation}
To study the spectrum of \( P_\xi[w] \), we first need to examine the spectrum of \( P_\xi[w_0] \). When \( a = 0 \) in Lemma 2.1, it corresponds to the trivial wave \( w = w_0 \). A direct Fourier calculation shows that
\begin{equation}\label{y4.2}
P_\xi[w_0]e^{inz}=\lambda e^{inz},\quad n\in\mathbb{Z},
\end{equation}
where
\begin{equation}
\lambda=\frac{i\left(n+\xi\right)\left(\left(n+\xi\right)^2-1\right)k^3\left(c_0-w_0^2\right)}{1+k^2\left(n+\xi\right)^2}=:i\Omega_{n,\xi}.
\end{equation}
We can see that all \( \Omega_{n,\xi} \) are real numbers, indicating that the trivial solution in equation (\ref{y1.3}) is spectrally stable. Furthermore, even when \( a \) is small, individual eigenvalues remain purely imaginary, but multiple eigenvalues colliding on imaginary axis may bifurcate to leave the imaginary axis and trigger instabilities. Consequently, precisely determining the positions of these eigenvalues \( i\Omega_{n,\xi} \), and in particular clarifying their multiplicities, is of vital importance. We summarize the possible collision cases of eigenvalues in the following lemma.

\begin{lemma}
For \( \xi \in [0, 1/2] \), the eigenvalues \( i\Omega_{n,\xi} \) satisfy the following properties:\\
(1) When \( \xi = 0 \), collisions only occur at \( i\Omega_{0,0} = i\Omega_{-1,0} = i\Omega_{1,0} = 0 \);\\
(2) There exists some \( \xi_0 \in (0, 1/2] \) such that collisions may occur between 
\(i\Omega_{-1,\xi_{0}} \) and \(i\Omega_{n,\xi_{0}},n\geq1,\) and between \( i\Omega_{0,\xi_{0}} \) and \(i\Omega_{-n,\xi_{0}},n\geq2\). \\
(3) When \( \xi \in (0, 1/2] \) and \( k^2 < 3 \), no collisions occur.
\end{lemma}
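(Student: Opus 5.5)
The plan is to reduce everything to one real-valued function of a single variable. By (\ref{y2.7})--(\ref{y2.8}) we have $c_0-w_0^2=\frac{b\,w_0^2}{1+k^2}>0$. Hence
\[
\Omega_{n,\xi}=k^3(c_0-w_0^2)\,f(n+\xi),\qquad f(s):=\frac{s(s^2-1)}{1+k^2s^2},
\]
and a collision $i\Omega_{n,\xi}=i\Omega_{m,\xi}$ with $n\neq m$ is equivalent to $f(n+\xi)=f(m+\xi)$. The basic facts about $f$ are:
\begin{itemize}
\item $f$ is odd, vanishes exactly at $0,\pm1$, and $f(s)\sim s/k^2$ as $|s|\to\infty$;
\item its derivative is
\[
f'(s)=\frac{k^2s^4+(3+k^2)s^2-1}{(1+k^2s^2)^2},
\]
so $f$ decreases on $(-s_*,s_*)$ and increases on $|s|>s_*$, where $s_*^2$ is the positive root of $k^2t^2+(3+k^2)t-1=0$ and $s_*<1/\sqrt3<1$.
\end{itemize}
Consequently $f<0$ on $(0,1)\cup(-\infty,-1)$, $f>0$ on $(-1,0)\cup(1,\infty)$, and $f$ is strictly increasing on $(1,\infty)$ and on $(-\infty,-1)$.

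For part (1), take $\xi=0$. Then $f(0)=f(\pm1)=0$. For $n\ge2$ the values $f(n)$ are positive and pairwise distinct by monotonicity on $(1,\infty)$. For $n\le-2$ the values are negative and distinct by oddness. So the only coincidence is the triple one at $0$.

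For part (2), fix $\xi\in(0,1/2]$ and sort the points $n+\xi$ by location:
\begin{itemize}
\item $n\ge1$: $n+\xi>1$, where $f>0$ and $f$ is increasing;
\item $n=0$: $\xi\in(0,1/2]$, where $f<0$;
\item $n=-1$: $-1+\xi\in(-1,-1/2]$, where $f>0$;
\item $n\le-2$: $n+\xi<-1$, where $f<0$ and $f$ is increasing.
\end{itemize}
By the signs and the monotonicity on each branch, the only possible collisions are $\Omega_{-1,\xi}=\Omega_{n,\xi}$ with $n\ge1$ and $\Omega_{0,\xi}=\Omega_{-n,\xi}$ with $n\ge2$, which is exactly the list in (2). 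To show such collisions can actually occur, I would use the intermediate value theorem. At $\xi=1/2$, compare $f(-1/2)=\frac{3}{2(4+k^2)}$ with $f(3/2)=\frac{15}{8+18k^2}$: one gets $f(-1/2)>f(3/2)$ exactly when $k^2>4$. Near $\xi=0^+$, the difference $f(1+\xi)-f(-1+\xi)$ has the sign of $f''(1)$, as explained below. When these two signs disagree, there is a crossing $\xi_0\in(0,1/2]$. The pair $(0,-2)$ is handled the same way, via $f(\xi)-f(\xi-2)=f(\xi)+f(2-\xi)$.

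For part (3), by monotonicity it suffices to rule out the nearest pairs, namely $(-1,1)$ and $(0,-2)$; the farther indices then lie strictly beyond. For the first pair, oddness gives
\[
f(1+\xi)-f(-1+\xi)=f(1+\xi)+f(1-\xi)=:h(\xi),
\]
and I must show $h(\xi)>0$ on $(0,1/2]$. Since $f(1)=0$, expanding gives $h(\xi)=f''(1)\xi^2+O(\xi^4)$, and I expect $k^2<3$ to be precisely the condition making $f''(1)>0$. The plan is then:
\begin{enumerate}
\item clear the positive denominators $(1+k^2(1\pm\xi)^2)$;
\item write the numerator as $\xi^2$ times a polynomial $q(\xi^2;k^2)$;
\item prove $q>0$ for $\xi\in(0,1/2]$ and $k^2<3$.
\end{enumerate}
Step 3 should follow from checking coefficients, or from checking the endpoint values together with monotonicity in $\xi^2$. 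The pair $(0,-2)$ reduces in the same way to positivity of $f(2-\xi)+f(\xi)$ on $(0,1/2]$. This should be easier, because $f(2-\xi)\ge f(3/2)$ while $|f(\xi)|\le|f(s_*)|$.

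The main obstacle is this explicit polynomial positivity in step 3, in particular verifying that the threshold really is $k^2=3$ uniformly on all of $(0,1/2]$ and not only to leading order in $\xi$.
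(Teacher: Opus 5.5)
Your proposal is correct and follows essentially the same route as the paper: you write $\Omega_{n,\xi}=k^3(c_0-w_0^2)f(n+\xi)$, sort the indices by the sign and monotonicity of $f$ to get (1) and the candidate pairs in (2), and reduce (3) to the nearest pairs $(-1,1)$ and $(0,-2)$; your intermediate-value argument showing that crossings actually occur goes beyond what the paper proves. The step you flag as the main obstacle is immediate, since the numerator factors exactly as $2\xi^2(3-k^2+k^2\xi^2)$, and because $f(\xi)+f(2-\xi)=h(1-\xi)$ the second pair gives $2(1-\xi)^2(3-2k^2\xi+k^2\xi^2)$; these are precisely the paper's two displayed difference formulas, and both are positive for $k^2<3$.
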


\begin{proof}
(1) When \( \xi = 0 \), it is clear that \( i\Omega_{0,0} = i\Omega_{-1,0} = i\Omega_{1,0} = 0 \). The 
mapping \( n \mapsto \Omega_{n,0} \) is an odd function and strictly increasing  for \( n \geq 1 \). Thus, we have
   \[
   \cdots < \Omega_{-3,0} < \Omega_{-2,0} < \Omega_{-1,0} = \Omega_{0,0} = \Omega_{1,0} = 
   0 < \Omega_{2,0} < \Omega_{3,0} < \cdots.
   \]

(2) For \( \xi \in (0, 1/2] \), noting that the function \( f(x) = \frac{x(x^2 - 1)}{1 + k^2 x^2} \) is odd and monotonically increasing for \( x > \sqrt{3}/3 \). Thus,
$$ \cdots < \Omega_{-3,\xi} < \Omega_{-2,\xi} ,\quad\Omega_{1,\xi} <\Omega_{2,\xi} < \Omega_{3,\xi} < \cdots.$$
Additionally, we have
    
    \begin{equation}\label{-2-xi}
   \Omega_{-2,\xi} = \frac{\left(\xi-2\right)\left(\left(\xi-2\right)^2-1\right)k^3\left(c_0-w_0^2\right)}{1+k^2\left(\xi-2\right)^2}< 0,
    \end{equation}
    \begin{equation}\label{1-xi}
   \Omega_{1,\xi} = \frac{\left(\xi+1\right)\left(\left(\xi+1\right)^2-1\right)k^3\left(c_0-w_0^2\right)}{1+k^2\left(\xi+1\right)^2}> 0,
    \end{equation}
      \begin{equation}\label{-1-xi}
   \Omega_{-1,\xi} = \frac{\left(\xi-1\right)\left(\left(\xi-1\right)^2-1\right)k^3\left(c_0-w_0^2\right)}{1+k^2\left(\xi-1\right)^2}> 0,
    \end{equation}
   and
 \begin{equation}\label{0-xi}
   \Omega_{0,\xi} =  \frac{\xi ( \xi^2-1)k^3\left(c_0-w_0^2\right)}{1 + k^2 \xi^2} < 0.
    \end{equation}
    Thus,
 \begin{equation}
 \cdots < \Omega_{-3,\xi} < \Omega_{-2,\xi}<0<\Omega_{1,\xi} <\Omega_{2,\xi} < \Omega_{3,\xi} < \cdots.
\end{equation}
  There are two possible collision cases: between \( i\Omega_{-1,\xi} \) and \( i\Omega_{n,\xi}, n\geq1,\) and between \( i\Omega_{0,\xi} \) and \( i\Omega_{-n,\xi} , n\geq2.\) 
  
(3) When \( k^2 < 3 \),
   \begin{equation}\label{4.7xz}
   \Omega_{-2,\xi} - \Omega_{0,\xi} = -\frac{2 k^3 (\xi - 1)^2 (3 - 2k^2 \xi + k^2 \xi^2)(c_0-w_0^2)}{\big[k^2 (\xi - 2)^2 + 1\big](k^2 \xi^2 + 1)} < 0
   \end{equation}
   and
   \begin{equation}\label{4.8xz}
   \Omega_{-1,\xi} - \Omega_{1,\xi} = -\frac{2 k^3 \xi^2 (3 - k^2 + k^2 \xi^2)(c_0-w_0^2)}{\big[k^2 (\xi - 1)^2 + 1\big]\big[k^2 (\xi + 1)^2 + 1\big]} <0.
   \end{equation}
By (\ref{-1-xi})-(\ref{4.8xz}), we get
   \[
   \cdots < \Omega_{-3,\xi} < \Omega_{-2,\xi} < \Omega_{0,\xi}< 0 < \Omega_{-1,\xi} < \Omega_{1,\xi}  < \Omega_{-2,\xi} < \Omega_{-3,\xi} < \cdots.
   \]
\end{proof}

Based on the lemma presented above, each of these collision events could cause instability in the eigenvalues of the operator \( P_\xi[w] \). The present study focuses on analyzing the relatively manageable scenario of modulational stability/instability—specifically, investigating eigenvalues in the vicinity of the origin within \( \mathbb{C} \). In particular, when \( k^2 < 3 \), the only potential cause of spectral instability is \( \Omega_{0,\xi} = \Omega_{-1,\xi} = \Omega_{1,\xi} = 0 \). Consequently, spectral stability can be deduced if the criteria for modulational stability are met.

\section{ Analysis of modulational instability}

Based on standard perturbation theory, we treat the operator \( P_\xi[w] \) as a perturbed 
form of the constant-coefficient operator \( P_0[w_0] \). In fact, as $|a|,\xi\to0$, we can obtain
\[
\| P_\xi[w] - P_0[w_0] \|_{H^1(\mathbb{T}) \to L^2(\mathbb{T})} = O(|a| + \xi).
\]
When \( |a| \) and \( \xi \) are small, the spectra of \( P_\xi[w]  \) and \( P_0[w_0] \) remain close to each other.
 From the first  property in Lemma 4.1, we derive the spectral splitting relationship
\[
\sigma(P_\xi[w]) = \sigma_1(P_\xi[w]) \cup \sigma_2(P_\xi[w]), \quad \sigma_1(P_\xi[w]) \cap \sigma_2(P_\xi[w]) = \emptyset.
\]
Here, \( \sigma_1(P_\xi[w]) \) contains three eigenvalues, which are the extensions of the eigenvalues \(i\Omega_{0,\xi},i\Omega_{\pm1,\xi}\) when \( |a| \) is small, denoted as \(\lambda_{j}(\xi,a),j=1,2,3\). The set \( \sigma_2(P_\xi[w]) \) consists of extensions of eigenvalues with \( i\Omega_{n,\xi}, |n| \geq 2 \). When \( |a| \) and \( \xi \) are sufficiently small, 
\( \sigma_2(P_\xi[w]) \) is a subset of the imaginary axis. Thus, we only need to determine the positions of these three eigenvalues in \( \sigma_1(P_\xi[w]) \). Now, we give the main result of the present paper.

\begin{theorem}\label{thm5.1}
 (Modulational Instability). For sufficiently small \( 2\pi/k \)-periodic waves of equation (\ref{y1.3}) with $b>0$, if
\[
g(k,b) < 0,
\]
where
\[
g(k,b)=6+10k^2+26k^4+22k^6+b(12+8k^2+3k^4-11k^6)+b^2(6-2k^2-5k^4+k^6),
\]
then the wave is modulationally unstable. Otherwise, the wave is  modulationally stable. Specifically, if $g(k,b) >0$ and $k^2<3$, the wave is spectrally stable. We give a schematic plot of $g(k,b)<0$ in Figure \ref{fig:instability}.
\end{theorem}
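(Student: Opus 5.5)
The plan is the standard three-dimensional spectral reduction near the origin, in the style of Bronski--Hur and Hur--Johnson as adapted to the Novikov equation in \cite{novikov2025}. At $a=0$, $\xi=0$ the eigenvalue $0$ of $P_0[w_0]$ has multiplicity three, with eigenfunctions $1,\cos z,\sin z$; by Lemma 4.1(1) it is isolated from $\sigma_2$. For $|a|,\xi$ small, Kato's perturbation theory gives a three-dimensional spectral subspace depending analytically on $(a,\xi)$. The eigenvalues $\lambda_j(\xi,a)$, $j=1,2,3$, are the roots of $\det(\mathbf{B}_{a,\xi}-\lambda \mathbf{I}_{a,\xi})=0$. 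Here
\[
\mathbf{B}_{a,\xi}=\big(\langle P_\xi[w]\phi_j,\phi_i\rangle\big),\qquad \mathbf{I}_{a,\xi}=\big(\langle \phi_j,\phi_i\rangle\big),
\]
and $\{\phi_1,\phi_2,\phi_3\}$ is a basis of the generalized kernel of $P_0[w]$, extended to $\xi\neq0$ by expansion.

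\textbf{Step 1 (the basis at $\xi=0$).} Differentiating the profile equation (\ref{y2.2}) with respect to $z$, $a$, $d$ (or $c$) and using Lemma 2.1 gives the generalized kernel of $P_0[w]$ explicitly up to $O(a^2)$. We take
\[
\phi_1 \propto \partial_a w=\cos z+2a(e_1+e_2\cos2z)+\dots,\qquad \phi_2\propto -\tfrac1a\partial_z w=\sin z+2ae_2\sin2z+\dots,
\]
and $\phi_3=1+O(a)$, corrected by the derivative with respect to $d$, which produces the Jordan-block structure coming from the $d$- and $c$-dependence. We then check that $P_0[w]\phi_2=0$, and that $P_0[w]\phi_1$ and $P_0[w]\phi_3$ are expressible within the span with coefficients computable from $c_2$, $e_1$, $e_2$.

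\textbf{Step 2 (expansion of the matrices).} Next we expand $P_\xi[w]=P_0[w]+i\xi[P_0,z]+\dots$, using commutator notation $[P_\xi,\cdot]$ and expanding the factor $(1-k^2(\partial_z+i\xi)^2)^{-1}$ in $\xi$. We compute $\mathbf{B}_{a,\xi}$ and $\mathbf{I}_{a,\xi}$ up to the order $O(\xi^3+a^2\xi)$ needed. The symmetries (Lemma 3.1 and the evenness of $w$) force $\mathbf{B}$ to be $i\xi$ times a matrix whose entries are real to leading order. After rescaling $\lambda=i\xi X$, the characteristic equation becomes
\[
\det\big(\mathbf{B}/(i\xi)-X\mathbf{I}\big)=0,
\]
a real cubic $X^3+p_2X^2+p_1X+p_0$ with coefficients analytic in $(a,\xi)$, with real coefficients because of the symmetry.

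\textbf{Step 3 (the discriminant).} At $a=0$ the cubic has roots $\Omega_{0,\xi}/\xi$ and $\Omega_{\pm1,\xi}/\xi$, which are real and (for $\xi\neq0$ small) have a double root structure in the limit $\xi\to0$: $\Omega_{\pm1,\xi}/\xi\to$ a common value. The discriminant $\Delta(a,\xi)$ of the cubic then takes the form
\[
\Delta=\xi^2\big(C_1\xi^2+C_2a^2\big)(\dots)+\text{h.o.t.},
\]
with $C_1>0$. The key claim is that the sign of $\Delta$ for $0<\xi\ll|a|$ equals the sign of $g(k,b)$ times a positive factor. When $\Delta<0$, there is a complex-conjugate pair of $X$, hence some $\lambda$ with $\operatorname{Re}\lambda\neq0$ for all small $\xi$, which gives modulational instability. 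When $\Delta>0$, all three roots are real and the spectrum near $0$ lies on $i\mathbb{R}$, which gives modulational stability. Combined with Lemma 4.1(3), which excludes all other collisions when $k^2<3$, this yields spectral stability in that case.

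\textbf{Expected obstacle.} The main obstacle is Step 2–3: carrying the $a^2$ corrections (involving $c_2$, $e_1$, $e_2$ and the $d$-derivative of $w_0$, $c_0$) consistently through the nonlocal operator $(1-k^2(\partial_z+i\xi)^2)^{-1}$. After that, the discriminant's leading $a^2\xi^2$ coefficient must be simplified to $g(k,b)$ multiplied by a manifestly positive factor, namely powers of $k$, $(1+k^2)$, $b$, $(1+b)$, and $c_0-w_0^2=bw_0^2/(1+k^2)>0$. I would do this symbolically, first checking the case $b=3$ against the Novikov result of \cite{novikov2025} as a consistency test.
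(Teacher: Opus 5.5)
\textbf{There is a genuine gap.} Your outline follows the paper's route: reduction to the generalized kernel, the scaling $\lambda=i\xi X$, and the sign of the $a^2$-coefficient of the discriminant. But the step that carries all the content, namely that this coefficient is a positive multiple of $g(k,b)$, is only asserted. By my own Schur-complement calculation below, it does not hold for $k^2>3$.

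Work in the paper's basis ($\phi_1\propto c_aw_d-c_dw_a\in\ker P_0[w]$). To leading order the reduced matrix $M=\mathbf I^{-1}\mathbf B$ has:
\begin{itemize}
\item diagonal $(i\xi r,i\xi r,i\xi s)$, with $r=\frac{2k^3(c_0-w_0^2)}{1+k^2}>0>s=-k^3(c_0-w_0^2)$;
\item the Jordan entry $M_{23}=\mu a$, with $\mu=\frac{2kw_0(k^2+1+b)}{1+k^2}>0$;
\item dispersive entries $M_{12}=-M_{21}=\nu\xi^2$;
\item the mean-flow entry $M_{31}=ia\xi m$.
\end{itemize}
Eliminating $\phi_3$ by a Schur complement, the two roots near $r$ are
\[
X=r\pm\sqrt{\nu^2\xi^2-\frac{\nu\mu m}{r-s}\,a^2}+\cdots,
\]
so the $a^2$-coefficient is linear in $\nu$. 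In fact, the paper's $\Delta(0,\xi)$ and $\Lambda$ are exactly $4(r-s)^4\nu^2\xi^2$ and $-4(r-s)^3\nu\mu m$, with $m=\gamma_2-e_3r=-\frac{kw_0\,g(k,b)}{12(1+k^2)(1+k^2+b)}$.

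However, $\nu$ is exact at $a=0$: $\nu\xi^2=\tfrac12(\Omega_{1,\xi}-\Omega_{-1,\xi})$. By (\ref{4.8xz}) this equals $\frac{k^3(3-k^2)(c_0-w_0^2)}{(1+k^2)^2}\xi^2+O(\xi^4)$, which changes sign at $k^2=3$. So the sign your computation would actually produce is that of $(3-k^2)g(k,b)$.
\begin{itemize}
\item For $k^2<3$ your plan proves the statement, including the spectral-stability clause via Lemma 4.1(3).
\item For $k^2>3$ the conclusion is reversed.
\end{itemize}

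The paper gets $g$ alone because its $\xi^2$-entries contain $3m_1-2y_1\propto 3+7k^2$ where $3m_1+2y_1\propto 3-k^2$ is needed. Apparently the complex conjugate of $(1-k^2(\partial_z+i\xi)^2)^{-1}\phi_i$ was dropped in the inner product. This is also why its $\Delta(0,\xi)$ carries $(7k^2+3)^2$ instead of the $(3-k^2)^2$ forced by (\ref{4.8xz}). Your proposed $b=3$ consistency check would expose the problem: $(3-k^2)g(k,3)=2(k^2-3)^2(k^2+4)^2\ge 0$.

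There are also smaller errors in your Steps 2--3.
\begin{itemize}
\item $\mathbf B$ is not $i\xi$ times a regular matrix. The Jordan entry $\langle P_0[w]\phi_3,\phi_2\rangle=\frac{kw_0(k^2+1+b)}{1+k^2}a+O(a^2)$ survives at $\xi=0$. With your choice $\phi_1\propto\partial_aw$, so does $\langle P_0[w]\phi_1,\phi_2\rangle=kc_2a^2+O(a^3)$, because $P_0[w]w_a=-kc_aw_z$. After division by $i\xi$ these entries are $O(a/\xi)$ and $O(a^2/\xi)$.
\item The cubic is still regular only because of parity: $\langle P_\xi\phi_2,\phi_1\rangle$ and $\langle P_\xi\phi_2,\phi_3\rangle$ are $O(\xi^2)$, and $\langle P_\xi\phi_1,\phi_3\rangle=O(a\xi)$. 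Hence $\det(\mathbf B-i\xi X\mathbf I)$ is divisible by $\xi^3$.
\item These singular entries, multiplied by $O(\xi)$ rescaled entries, are exactly what produce the $\xi$-independent $a^2$-term. So in your basis the $kc_2a^2$ entry cannot be truncated away; the paper avoids it by using the kernel element $c_aw_d-c_dw_a$.
\item The discriminant therefore behaves like $C_1\xi^2+C_2a^2$ with $C_1=4(r-s)^4\nu^2$, not like $\xi^2(C_1\xi^2+C_2a^2)$. Moreover $C_1$ vanishes at $k^2=3$, so your claim $C_1>0$ also fails there.
\end{itemize}
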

\begin{figure}[htbp]
    \centering
    \subfigure[  The set of $(k,b)$ for which $g(k,b) < 0$. ]{
    \includegraphics[width=0.45\textwidth]{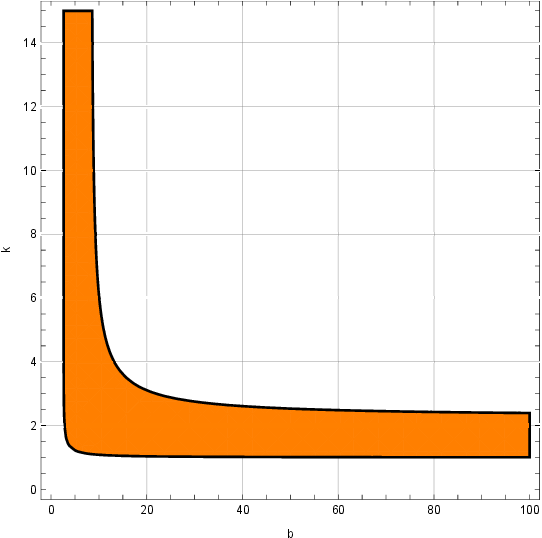}}
    \hspace{0.2in} 
    \subfigure[  A partially enlarged version of the left graph. ]{
    \includegraphics[width=0.45\textwidth]{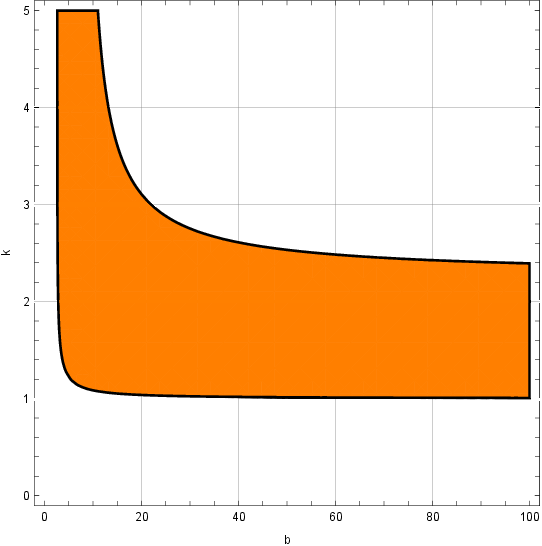}}
    \caption {The colored regions correspond to $g(k,b)<0$, the remaining blank regions correspond to $g(k,b)> 0$,
     the black curves are $g(k,b)=0$.}
    \label{fig:instability}
\end{figure}

\begin{proof}
 To analyze the properties of the eigenvalues  $\lambda_{j}(\xi,a)$ for $j=1,2,3$ when \( |a|,\xi \leq 1 \),
  we aim to project the eigenvalue problem (\ref{pwt}) onto a three-dimensional total eigenspace
 $$\Sigma_{a,\xi}=\bigoplus_{j\in\{1,2,3\}}\ker\left(P_\xi[w]-\lambda_{j}I\right)$$
  Specifically, we first need to construct a suitable basis \( \{ \phi_j(z; a,\xi) \}_{j=1}^3 \) for this space, and then compute the corresponding \( 3 \times 3 \) matrix
\begin{equation}\label{y-eq5.2}
I_a= \left(\frac{\langle\phi_j,\phi_i\rangle}{\langle\phi_i,\phi_i\rangle}\right)_{i,j=1,2,3}
\end{equation}
and
\begin{equation}\label{y-eq5.1}
B_{\xi,a}=\left(\frac{\langle P_\xi[w]\phi_j,\phi_i\rangle}{\langle\phi_i,\phi_i\rangle}\right)_{i,j=1,2,3}=\left(\frac{\langle k\mathcal{L}_{\xi}[w]\phi_{j},\left(1-k^{2}(\partial_{z}+i\xi)^{2}\right)^{-1}\phi_{i}\rangle}{\langle\phi_i,\phi_i\rangle}\right)_{i,j=1,2,3}.
\end{equation}

According to spectral perturbation theory, the critical eigenvalues \(\lambda_{j}(\xi,a)\) can be accurately given by the roots of the characteristic polynomial \( \det(B_{\xi,a} - \lambda I_a) = 0 \). Therefore, we need to find a suitable basis for the eigenspace to compute the above inner products. We first note that from the analysis for \( a = 0 \), the eigenspace $\Sigma_{0,\xi}$ can be spanned for \( |\xi| \ll 1 \) by the \( \xi \)-independent orthogonal basis
\begin{equation}\label{y5.2}
\phi_1(z;\xi,0)=\cos z,\quad\phi_2(z;\xi,0)=\sin z,\quad\phi_3(z;\xi,0)=1.
\end{equation}
Next, we consider the case of \( \xi = 0 \) and \( |a| \ll 1 \), with the aim of constructing a basis for the corresponding total eigenspace.  From previous analysis, this space must be three-dimensional. Differentiating equation (\ref{y2.2}) with respect to \( z \), we find that
\[
P_0[w] w_z = 0.
\]
Differentiating equation (\ref{y2.2}) with respect to \( a \) and \( d \), we obtain
\begin{equation}\label{y5.3}
\mathcal{L}_0[w]w_a=-c_a\left(1-k^2\partial_z^2\right)w_z\quad\mathrm{and}\quad\mathcal{L}_0[w]w_d=-c_d\left(1-k^2\partial_z^2\right)w_z.
\end{equation}
From (\ref{y5.3}), it follows that
$P_0[w]\left(c_aw_d-c_dw_a\right)=0.$
From the above, we derive two linearly independent elements \( w_z \) and \( c_a w_d-c_d w_a  \) for the kernel of \( P_0[w] \). 
Additionally, \( P_0[w] w_d = -k c_d w_z \) gives a generalized eigenvector \( w_d \). Thus, we obtain a basis for \( P_0[w] \):
\begin{equation}\label{y5.3zjd}
\phi_1(z; 0,a) = c_a w_d-c_d w_a, \quad \phi_2(z; 0,a) = w_z, \quad \phi_3(z; 0,a) = w_d.
\end{equation}
By using the expansions of \( w \) and \( c \) from (\ref{y2.9}) and (\ref{y2.10}),
 we   standardize (\ref{y5.3zjd})  as 
\begin{equation}\label{y-eq5.3}
\begin{aligned}
\phi_{1}(z;0,a) & =-\frac{(b+1)d}{2c_{0}}\left(c_{a}w_{d}-c_{d}w_{a}\right)=\cos(z)+a\left(e_{3}+2e_{2}\cos(2z)\right)+\mathcal{O}(a^{2}), \\
\end{aligned}
\end{equation}
\begin{equation}\label{y-eq5.4}
\phi_2(z;0,a):=-\frac{1}{a}w_z=\sin z+2ae_2\sin2z+O(a^2),
\end{equation}
\begin{equation}\label{y-eq5.5}
\phi_{3}(z;0,a){:}=(\partial_dw_0)^{-1}w_d=1+\mathcal{O}(a^2),
\end{equation}
where 
$$e_3:=2e_1-\frac{c_2}{c_0}w_0.$$
 The standard basis (\ref{y-eq5.3})-(\ref{y-eq5.5})  is compatible with the basis when \( a = 0 \).

To summarize, when \( a = 0 \) and \( \xi \) is small, \( P_\xi[w_0] \) possesses three purely imaginary eigenvalues near the origin, with the functions in formula (\ref{y5.2}) constituting an orthonormal basis for the corresponding eigenspace $\Sigma_{0,\xi}$. When \( \xi = 0 \) and \( |a| \) is small, \( P_0[w] \) possesses three eigenvalues at the origin, with the functions in equations (\ref{y-eq5.3})–(\ref{y-eq5.5}) constituting a basis for the eigenspace $\Sigma_{a,0}$. According to perturbation theory,  the functions \( \phi_j(z; 0,a) \) can be extended to a \( \xi \)-dependent basis of the total eigenspace $\Sigma_{a,\xi}$ for \( |a|, \xi \ll 1 \). However, the discussions in references \cite{27} indicate that the variation of the basis functions \( \phi_j(z; \xi,a) \) does not affect subsequent asymptotic computations, as they contribute only higher-order terms beyond the current requirements. Hence, the subsequent calculations shall employ \( \xi \)-independent basis functions \( \phi_j(z; 0,a) \).

For $|a| \ll 1$ and $\xi \ll 1$, a Baker-Campbell-Hausdorff expansion shows that
\begin{equation*}
\mathcal{L}_\xi[w]=\mathcal{L}_0[w]+i\xi L_1-\frac{1}{2}\xi^2L_2+\mathcal{O}(\xi^3)
\end{equation*}
as $\xi \to 0$, where
\begin{equation*}
\begin{aligned}
\mathcal{L}_{0}[w] &= \big[c_{0} - (1+b)w_{0}^{2}\big] \left( \partial_z + \partial_z^3 \right) 
    + a w_{0} \bigg[ 2(k^2 + 1 + b)\sin(z) \\
    &\quad - bk^2 \sin(z)\partial_z^2 - (bk^2 + 2 + 2b)\cos(z)\partial_z 
    + 2k^2 \cos(z)\partial_z^3 \bigg] + \mathcal{O}(a^2),
\end{aligned}
\end{equation*}

\begin{equation*}
\begin{aligned}
{L}_1 : = [\mathcal{L}_0, z] 
              &= \bigl[c_0 - (1+b)w_0^2\bigr]\bigl(1 + 3\partial_z^2\bigr) + a w_0 \bigg[ - (bk^2 + 2 + 2b)\cos(z)\\
              &\quad - 2bk^2\sin(z)\partial_z + 6k^2\cos(z)\partial_z^2 \bigg] + \mathcal{O}(a^2),
\end{aligned}
\end{equation*}
and
\begin{equation*}
\begin{aligned}
L_2 & :=[L_1,z] =6\big[c_{0}-(1+b)w_{0}^{2})\big]\partial_z+\mathcal{O}(a).
\end{aligned}
\end{equation*}
Combining this with the expansions in (\ref{y-eq5.3})-(\ref{y-eq5.5}) yields

\begin{equation*}\begin{aligned}
\mathcal{L}_{\xi}[w]\phi_1& =-2\big[c_{0}-(1+b)w_{0}^{2}\big]i\xi\cos(z)+a\bigg\{12\big[c_{0}-(1+b)w_{0}^{2}\big]e_{2}\sin(2z) \\
 & +2(2k^2+bk^2+2+2b)w_{0}\sin(z)\cos(z)\bigg\} +ia\xi\big[c_{0}-(1+b)w_{0}^{2}\big]\big(e_{3}-22e_{2}\cos(2z)\big)\\
 &+ai\xi w_{0}\bigg[-\left(6k^2+bk^2+2+2b\right)\cos^{2}(z)+2bk^{2}\sin^{2}(z)\bigg]  \\
 &  +3\xi^{2}\bigg[c_{0}-(1+b)w_{0}^{2}\big]\sin(z)+O\left(a^{2}+a\xi^{2}+\xi^{3}\right),
\end{aligned}\end{equation*}

\begin{equation*}\begin{aligned}
\mathcal{L}_{\xi}[w]\phi_2&=-2i\xi\big[c_{0}-(1+b)w_{0}^{2}\big]\sin(z)-3\xi^{2}\big[c_{0}-(1+b)w_{0}^{2}\big]\cos(z)\\
&+a\big[c_{0}-(1+b)w_{0}^{2}\big]\bigg[-12e_{2}\cos(2z)-22i \xi e_{2}\sin(2z)\bigg]\\
 & +aw_{0}\bigg[-\left(2k^2+bk^2+2+2b\right)\cos^{2}(z)+\left(2k^2+bk^2+2+2b\right)\sin^{2}(z)\bigg]\\
 &+ai\xi w_{0}\bigg[-\left(3bk^2+6k^2+2+2b\right)\sin(z)\cos(z)\bigg]  +O\left(a^{2}+a\xi^{2}+\xi^{3}\right),
\end{aligned}\end{equation*}
and
\begin{equation*}\begin{aligned}
\mathcal{L}_{\xi}[w]\phi_{3} & =i\xi\big[c_{0}-(1+b)w_{0}^{2}\big]+aw_{0}\bigg[2(k^2+1+b)\sin(z)\bigg] \\
 & -ai\xi w_{0}\bigg[(bk^2+2+2b)\cos(z)\bigg]+O\left(a^{2}+a\xi^{2}+\xi^{3}\right).
\end{aligned}\end{equation*}
Additionally, noting that for all $ n \in \mathbb{N} $ we have
\begin{equation*}\begin{aligned}
\left(1-k^{2}(\partial_{z}+i\xi)^{2}\right)^{-1}\cos(nz)&=\frac{1+k^{2}(n^{2}+\xi^{2})}{1+k^{4}(n^{2}-\xi^{2})^{2}+2k^{2}(n^{2}+\xi^{2})}\cos(nz)\\
&\quad -i\frac{2k^{2}n\xi}{1+k^{4}(n^{2}-\xi^{2})^{2}+2k^{2}(n^{2}+\xi^{2})}\sin(nz)
\end{aligned}\end{equation*}
and
\begin{equation*}\begin{aligned}
\left(1-k^{2}(\partial_{z}+i\xi)^{2}\right)^{-1}\sin(nz) & =i\frac{2k^{2}n\xi}{1+k^{4}(n^{2}-\xi^{2})^{2}+2k^{2}(n^{2}+\xi^{2})}\cos(nz) \\
 &\quad   +\frac{1+k^{2}(n^{2}+\xi^{2})}{1+k^{4}(n^{2}-\xi^{2})^{2}+2k^{2}(n^{2}+\xi^{2})}\sin(nz).
\end{aligned}\end{equation*}
Combining this with the expansions in (\ref{y-eq5.3})-(\ref{y-eq5.5}) yields
\begin{equation*}\begin{aligned}
(1-k^{2}(\partial_{z}+i\xi)^{2})^{-1}\phi_{1} &  =\frac{1+k^{2}(1+\xi^{2})}{1+k^{4}(1-\xi^{2})^{2}+2k^{2}(1+\xi^{2})}\cos z-i\frac{2k^{2}\xi}{1+k^{4}(1-\xi^{2})^{2}+2k^{2}(1+\xi^{2})}\sin z\\
 & \quad +ae_{3}\frac{1+k^{2}\xi^{2}}{1+k^{4}\xi^{4}+2k^{2}\xi^{2}} +2ae_{2}\bigg(\frac{1+k^{2}(4+\xi^{2})}{1+k^{4}(4-\xi^{2})^{2}+2k^{2}(4+\xi^{2})}\cos2z\\
 & \quad -i\frac{4k^{2}\xi}{1+k^{4}(4-\xi^{2})^{2}+2k^{2}(4+\xi^{2})}\sin2z\bigg)+O\left(a^{2}\right),
\end{aligned}\end{equation*}

\begin{equation*}\begin{aligned}
(1-k^{2}(\partial_{z}+i\xi)^{2})^{-1}\phi_{2}&  =i\frac{2k^{2}\xi}{1+k^{4}(1-\xi^{2})^{2}+2k^{2}(1+\xi^{2})}\cos z+\frac{1+k^{2}(1+\xi^{2})}{1+k^{4}(1-\xi^{2})^{2}+2k^{2}(1+\xi^{2})}\sin z \\
 &\quad +2ae_{2}\bigg(i\frac{4k^{2}\xi}{1+k^{4}(4-\xi^{2})^{2}+2k^{2}(4+\xi^{2})}\cos2z\\
 &\quad+\frac{1+k^{2}(4+\xi^{2})}{1+k^{4}(4-\xi^{2})^{2}+2k^{2}(4+\xi^{2})}\sin2z\bigg)+O\left(a^{2}\right),
\end{aligned}\end{equation*}
and
\begin{equation*}\begin{aligned}
(1-k^{2}(\partial_{z}+i\xi)^{2})^{-1}\phi_{3} & =\frac{1}{1+k^{2}\xi^{2}}+O\left(a^{2}\right).
\end{aligned}\end{equation*}
A direct calculation using the above asymptotic expansions yield
\begin{equation}\begin{aligned}\label{eq:L38z}
\left\langle P_{\xi}[w]\phi_{1},\phi_{1}\right\rangle & =\left\langle k\mathcal{L}_{\xi}[w]\phi_{1},\left(1-k^{2}(\partial_{z}+i\xi)^{2}\right)^{-1}\phi_{1}\right\rangle \\
 & =-\frac{i\xi k\big[c_{0}-(1+b)w_{0}^{2}\big]}{1+k^{2}}+O\left(a^{2}+a\xi^{2}+\xi^{3}\right),
\end{aligned}\end{equation}

\begin{equation}
\left\langle P_{\xi}[w]\phi_{1},\phi_{2}\right\rangle=k\big[c_{0}-(1+b)w_{0}^{2}\big]\left(\frac{3}{2(1+k^{2})}+\frac{2k^{2}}{(1+k^{2})^{2}}\right)\xi^{2}+O\left(a^{2}+a\xi^{2}+\xi^{3}\right),
\end{equation}
\begin{equation}
\left\langle P_{\xi}[w]\phi_{1},\phi_{3}\right\rangle=ai\xi k\bigg\{e_{3}\big[c_{0}-(1+b)w_{0}^{2}\big]+w_0\bigg(\frac{b}{2}k^2-3k^2-1-b\bigg)\bigg\}+O\left(a^{2}+a\xi^{2}+\xi^{3}\right),
\end{equation}
\begin{equation}
\left\langle P_\xi[w]\phi_2,\phi_1\right\rangle=-k\big[c_{0}-(1+b)w_{0}^{2}\big]\left(\frac{3}{2(1+k^2)}+\frac{2k^2}{(1+k^2)^2}\right)\xi^2+O\left(a^2+a\xi^2+\xi^3\right),
\end{equation}
\begin{equation}
\left\langle P_{\xi}[w]\phi_{2},\phi_{2}\right\rangle=-i\xi k\frac{\big[c_{0}-(1+b)w_{0}^{2}\big]}{1+k^{2}}+O\left(a^{2}+a\xi^{2}+\xi^{3}\right),
\end{equation}
\begin{equation}
\left\langle P_{\xi}[w]\phi_{2},\phi_{3}\right\rangle=0+O\left(a^{2}+a\xi^{2}+\xi^{3}\right),
\end{equation}
\begin{equation}\begin{aligned}
\left\langle P_\xi[w]\phi_3,\phi_1\right\rangle=&ai\xi\bigg\{ke_3\big[c_0-(1+b)w_0^2\big]-\frac{2w_0k^3\left(k^2+1+b\right)}{(1+k^2)^2}-\frac{kw_0\left(bk^2+2+2b\right)}{2(1+k^2)}\bigg\}\\
&+O\left(a^2+a\xi^2+\xi^3\right),
\end{aligned}\end{equation}
 \begin{equation}
\left\langle P_\xi[w]\phi_3,\phi_2\right\rangle=\frac{akw_0\left(k^2+1+b\right)}{1+k^2}+O\left(a^2+a\xi^2+\xi^3\right),
 \end{equation}
 \begin{equation}
\left\langle P_{\xi}[w]\phi_{3},\phi_{3}\right\rangle=i\xi k\big[c_0-(1+b)w_0^2\big]+O\left(a^{2}+a\xi^{2}+\xi^{3}\right).
 \end{equation}
Using (\ref{y-eq5.3})–(\ref{y-eq5.5}) and truncating to \( O(a^2) \), we get the following inner products
\begin{flalign}\label{eq:L47z}
\langle \phi_1, \phi_1 \rangle = \langle \phi_2, \phi_2 \rangle = \frac{1}{2}, \quad \langle \phi_1, \phi_2 \rangle = \langle \phi_2, \phi_1 \rangle = \langle \phi_2, \phi_3 \rangle = \langle \phi_3, \phi_2 \rangle = 0,
\end{flalign}
\begin{flalign}\label{eq:L48z}
\langle \phi_1, \phi_3 \rangle = \langle \phi_3, \phi_1 \rangle = ae_3, \quad \langle \phi_3, \phi_3 \rangle = 1
\end{flalign}
as \(|a|\to 0\). Substituting (\ref{eq:L38z})-(\ref{eq:L48z}) into (\ref{y-eq5.2})- (\ref{y-eq5.1}), we obtain
\[
I_a = \begin{pmatrix} 
1 & 0 & 2ae_3\\
0 & 1 & 0 \\
ae_3 & 0 & 1
\end{pmatrix}+O\left(a^{2}\right)
\]
and
$$\begin{aligned}
 B_{\xi,a} &  =i\xi
\begin{pmatrix}
-2k\alpha m_{1} & 0 & 0 \\
0 & -2k\alpha m_{1} & 0 \\
 0 & 0 & k\alpha
\end{pmatrix}+a 
\begin{pmatrix}
0 & 0 & 0 \\
0 & 0 & 2kw_{0}m_{1}\left(k^2+1+b\right) \\
0 & 0 & 0
\end{pmatrix} \\
 &  \quad +ai\xi
\begin{pmatrix}
0 & 0 & \gamma_1 \\
0 & 0 & 0 \\
\gamma_2 & 0 & 0
\end{pmatrix}+\xi^2
\begin{pmatrix}
0 & k\alpha\left(-3m_1+2y_1\right) & 0 \\
k\alpha\left(3m_1-2y_1\right) & 0 & 0 \\
0 & 0 & 0
\end{pmatrix} \\
 & \quad +O\left(a^{2}+a\xi^{2}+\xi^{3}\right),
\end{aligned}$$
where
$$\alpha=c_0-(1+b)w_0^2,\quad y_1=-\frac{2k^2}{\left(k^2+1\right)^2},\quad m_1=\frac{1}{k^2+1},$$
$$\gamma_1=2k\alpha e_3+2ky_1w_0\left(k^2+1+b\right)-kw_0m_1\left(bk^2+2+2b\right),$$
$$\gamma_2=ke_3\alpha+\frac{kw_0\left(bk^2-6k^2-2-2b\right)}{2}$$
as \(\xi,|a|\to 0\). Next, we find the roots of the characteristic polynomial \( \det(B_{\xi,a} - \lambda I_a) = 0 \).

\[
\det(B_{\xi,a} - \lambda I_a) = D_0(\xi,a)+iD_1(\xi,a)\lambda+D_2(\xi,a)\lambda^2+iD_3(\xi,a)\lambda^3.
\]
Let \( D_j = \xi^{3-j}d_j \), \( j = 0, 1, 2, 3 \). Then
\[
  \det{\big(B_{\xi,a}-(i\xi\lambda )I_{a}\big)} =i\xi^{3}\big(d_{3}(\xi,a)\lambda^{3}-d_{2}(\xi,a)\lambda^{2}-d_{1}(\xi,a)\lambda+d_{0}(\xi,a)\big):=i\xi^{3}H(\lambda;\xi,a).
\]
The discriminant of \( H(\lambda;\xi,a) \) is given by
\[
\Delta(a,\xi):= 18d_3d_2d_1d_0 + d_2^2d_1^2 + 4d_2^3d_0 + 4d_3d_1^3 - 27d_3^2d_0^2.
\]
When \( \Delta(a,\xi) > 0 \), the polynomial \( H \) has three real roots, which means the wave is modulationally stable. Conversely,  when \( \Delta(a,\xi) < 0 \), \( H \) has a pair of complex conjugate roots, indicating modulation instability. By direct computation, we have
$$
\Delta(a,\xi)=\Delta(0,\xi)+\Lambda(k,b,d)a^2+O\left(a^2\left(a^2+\xi^2\right)\right)
$$
as \(|a|\to 0\) for \(\xi>0\) small, where
$$
\Delta(0,\xi)=\frac{4b^{\frac{6}{1+b}}d^{\frac{12}{1+b}}k^{18}\left(k^2+3\right)^4\left(7k^2+3\right)^2\xi^2}{\left(k^2+1\right)^{{\frac{14+8b}{1+b}}}}
$$
and
\begin{equation*}\begin{aligned}
\Lambda(k,b,d)=&\left(\frac{2b^{\frac{4-b}{1+b}}d^{\frac{10}{1+b}}k^{14}\left(k^2+3\right)^3\left(7k^2+3\right)}{3\left(k^2+1\right)^{\frac{11+6b}{1+b}}}\right)g(k,b),
\end{aligned}\end{equation*}
with
\begin{equation*}\begin{aligned}
g(k,b)=6+10k^2+26k^4+22k^6+b(12+8k^2+3k^4-11k^6)+b^2(6-2k^2-5k^4+k^6).
\end{aligned}\end{equation*}
Accordingly, the modulational stability or instability behavior is governed by the sign of \(\Lambda(k,b,d)\). Specifically, when \(\Lambda(k,b,d)<0\), it follows that \(\Delta(a,\xi)<0\) for sufficiently small \(\xi>0\) (with the value depending on a sufficiently small fixed \(|a|\)), which is indicative of modulational instability. In contrast, if \(\Lambda(k,b,d)>0\), we have \(\Delta(a,\xi)>0\) for \(\xi>0\) and \(\xi,|a|\) sufficiently small, corresponding to modulational stability. Additionally, it can be readily observed that the sign of \(\Lambda(k,b,d)\) is dictated by the sign of \(g(k,b)\). Recalling Lemma 4.1 and the associated discussion beneath it, we know that for $k^2<3$, the sole form of potential spectral instability is modulational instability. Consequently, such waves are inevitably spectrally stable when 
$g(k,b)>0$. Combining all the above arguments, we thus finish the proof of Theorem 5.1.
\end{proof}

Next, we will analyze the function $g(k,b)$ in detail. For
$$g(k,b)=6+10k^2+26k^4+22k^6+b(12+8k^2+3k^4-11k^6)+b^2(6-2k^2-5k^4+k^6)$$
with $b>0$, set $x=k^2\geq0$. Then $g$ becomes a quadratic in $b$:
$$g(x,b)=A(x)b^2+B(x)b+C(x),$$
where
$$A(x)=x^3-5x^2-2x+6,$$
$$B(x)=-11x^3+3x^2+8x+12,$$
$$C(x)=22x^3+26x^2+10x+6.$$
A simple calculation shows that the equation \(A(x) = 0\) has two positive roots \(x_1=2+\sqrt{10} \approx 5.16228\) and \(x_2=1\). When $0<x<1$ or $x>2+\sqrt{10},$ we have  $ A(x)>0 $, and  $ g(x,b) $ represents a parabola going upwards. When $1<x<2+\sqrt{10}$, it yields that $ A(x)<0,$ and
$ g(x,b) $ is a parabola going downwards. The discriminant is
$$\Delta(x):=B^2(x)-4A(x)C(x)=3x^2\left(11x^4+90x^3+163x^2-120x-96\right).$$
The positive real root of $\Delta(x)$  is $x_0\approx 0.89795$ (i.e., $k_*= \sqrt{x_0} \approx 0.947615$).
 When $x>x_0$, the equation \(g(x,b) = 0\) has two roots
\begin{equation}\label{b1b2}
b_{1}(x)=\frac{- B(x) -\sqrt{\Delta(x)}}{2 A(x)},\quad b_{2}(x)=\frac{- B(x) +\sqrt{\Delta(x)}}{2  A(x)}.
\end{equation}
The sign of \(g(x,b)\) depends on \(x\) and \(b\) as follows.

For \(0 < x < x_0 \approx 0.89795\), we have \(A(x) > 0\) and \(\Delta(x) < 0\), which implies \(g(x,b) > 0\). For \(x_0 \leq  x < 1\), although \(\Delta(x) \geq 0\), both roots of the quadratic equation are negative. Since \(A(x) > 0\), the quadratic opens upward and is positive for all \(b > 0\). Consequently, \(g(x,b) > 0\) holds for all \(0 < x < 1\).

At \(x = 1\), it yields \(A(x) = 0\), so \(g(x,b)\) reduces to \(g(x,b) = B(x)b + C(x)\) with \(B(x) >0\) and \(C > 0\). Solving \(g(x,b) = 0\) gives \(b = -C(x)/B(x) =-\frac{16}{3}\). For all \(b >0\), we have \(g(x,b) > 0\).

For \(1 < x < x_1 \approx 5.16228\), then \(A(x) < 0\), \(\Delta(x) > 0\). The equation \(g(x,b) = 0\) has exactly one positive root \(b_1(x)\) (the other is negative). If \(0 < b < b_1(x)\), then \(g(x,b) > 0\). If \(b > b_1(x)\), then \(g(x,b) < 0\). Moreover, \(b_1(x)\) monotonically decreases with respect to \(x\) in the interval \((1,x_1)\) and satisfies \(\lim_{x \to 1^+} b_1(x) = +\infty\) and \(\lim_{x \to x_1^-} b_1(x) \approx 2.73694\).

At \(x = x_1\), it yields \(A(x) = 0\), so \(g(x,b)\) reduces to \(g(x,b) = B(x)b + C(x)\) with \(B(x) < 0\) and \(C > 0\). Solving \(g(x,b) = 0\) gives \(b = -C(x)/B(x)=\frac{23}{921}(59+16\sqrt{10}) \approx 2.73694\). For \(b < \frac{23}{921}(59+16\sqrt{10})\), \(g(x,b) > 0\). For \(b > \frac{23}{921}(59+16\sqrt{10})\), \(g(x,b) < 0\).

For \(x > x_1\), we have \(A(x) > 0\), \(\Delta(x) > 0\). The equation \(g(x,b) = 0\) has two positive roots \(b_1(x) < b_2(x)\). If \(0 < b < b_1(x)\) or \(b > b_2(x)\), then \(g(x,b) > 0\). If \(b_1(x) < b < b_2(x)\), then \(g(x,b) < 0\). As \(x\) increases from \(x_1\) to \(\infty\), \(b_1(x)\) decreases from \(\frac{23}{921}(59+16\sqrt{10})\approx 2.73694\) to \(\frac{11 - \sqrt{33}}{2} \approx 2.628\), and \(b_2(x)\) decreases from \(+\infty\) to \(\frac{11 + \sqrt{33}}{2} \approx 8.37228\).

 By the above analysis, we can derive the following lemma.
\begin{lemma}
 Given $b_1, b_2$ in (\ref{b1b2}), one has that $ g(k,b)<0 $ holds under the following three cases:

(1) $b > b_1$ for $1<k<\sqrt{2+\sqrt{10}}$; 

(2) $b_{1}<b<b_{2}$ for $k>\sqrt{2+\sqrt{10}}$;

(3) $b >\frac{23}{921}(59+16\sqrt{10})$ for $k=\sqrt{2+\sqrt{10}}.$
\end{lemma}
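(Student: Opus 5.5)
The plan is to treat $g$ as a quadratic in $b$ with $x=k^2>0$: write $g=A(x)b^2+B(x)b+C(x)$, where $A,B,C$ are the cubics displayed above, and read off the sign of $g$ for $b>0$ from the signs of $A$, $B$, $C$ and the discriminant $\Delta(x)=B^2-4AC$ on each $x$-interval. Two facts hold throughout. First, $C(x)>0$ for all $x\ge0$, since all its coefficients are positive. Second, $A(x)=(x-1)(x^2-4x-6)$, which vanishes at $x=1$ and at $x_1=2+\sqrt{10}$ (the other root $2-\sqrt{10}$ is negative). So $A<0$ on $(1,x_1)$ and $A>0$ on $(x_1,\infty)$.

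\emph{Case (1): $1<x<x_1$.} Here $A<0<C$, so $AC<0$ and hence $\Delta=B^2-4AC>0$. The product of the roots is $C/A<0$, so exactly one root is positive. By the formula (\ref{b1b2}), with $2A<0$, this positive root is $b_1(x)=\frac{-B-\sqrt{\Delta}}{2A}$, because $-B-\sqrt\Delta<0$ whatever the sign of $B$. Since the parabola opens downward, $g<0$ exactly when $b>b_1$. This gives (1).

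\emph{Case (3): $x=x_1$.} Here $g=B(x_1)b+C(x_1)$ is linear in $b$. Using $x_1^2=4x_1+6$ gives $x_1^3=22x_1+24$, and then
\[
B(x_1)=-11x_1^3+3x_1^2+8x_1+12=-219x_1-234<0,\qquad C(x_1)=22x_1^3+26x_1^2+10x_1+6=598x_1+690.
\]
So the zero is $b=\frac{598x_1+690}{219x_1+234}=\frac{23(26x_1+30)}{3(73x_1+78)}$. Substituting $x_1=2+\sqrt{10}$ and rationalizing should give $\frac{23}{921}(59+16\sqrt{10})$. For larger $b$ we have $g<0$, which is (3).

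\emph{Case (2): $x>x_1$.} Now $A>0$ and $C>0$, so the two roots have positive product. Negativity of $g$ for some $b>0$ then requires $\Delta>0$ and positive root sum $-B/A>0$, i.e.\ $B<0$. Once that holds, $g<0$ exactly for $b_1<b<b_2$. To get $B<0$: $B(x_1)<0$ was shown above, and $B'(x)=-33x^2+6x+8<0$ for $x\ge x_1$, so $B<0$ on $[x_1,\infty)$. To get $\Delta>0$: factor $\Delta=3x^2(11x^4+90x^3+163x^2-120x-96)$. For $x\ge 1$ the quartic factor is at least $11+90+163-120-96=48>0$, since each monomial term is increasing there. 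So $\Delta>0$ on $(1,\infty)$, which also confirms the claim used in case (1).

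The main obstacle is the exact simplification in case (3) to the stated closed form $\frac{23}{921}(59+16\sqrt{10})$, which needs careful surd arithmetic. A secondary point is the sign of $B$ near $x_1$, which needs exact evaluation rather than numerics. Everything else is sign bookkeeping for quadratics.
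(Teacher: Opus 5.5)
Your strategy is the same as the paper's: view $g$ as the quadratic $A(x)b^2+B(x)b+C(x)$ in $b$ with $x=k^2$, and track the signs of $A$, of $\Delta=B^2-4AC$, and of the roots. In several places you are more careful than the paper. The factorization $A=(x-1)(x^2-4x-6)$, the Vieta argument identifying the positive root when $A<0$, the proof that $B<0$ on $[x_1,\infty)$, and the proof that $\Delta>0$ for $x>1$ all replace assertions the paper makes only numerically (via $x_0\approx 0.898$).

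However, the step you flag as the main obstacle fails as written because of an arithmetic error. Using $x_1^2=4x_1+6$ and $x_1^3=22x_1+24$,
\[
B(x_1)=-11(22x_1+24)+3(4x_1+6)+8x_1+12=-222x_1-234,
\]
not $-219x_1-234$; numerically $B(x_1)\approx -1380.0$. With your denominator the root would be $\frac{23(26x_1+30)}{3(73x_1+78)}\approx 2.768$. That is not $\frac{23}{921}(59+16\sqrt{10})\approx 2.7369$, so your rationalization cannot succeed. With the corrected value,
\[
b=\frac{598x_1+690}{222x_1+234}=\frac{23}{3}\cdot\frac{13x_1+15}{37x_1+39}=\frac{23}{3}\cdot\frac{41+13\sqrt{10}}{113+37\sqrt{10}}.
\]
Multiply by the conjugate $113-37\sqrt{10}$. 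The denominator becomes $113^2-10\cdot 37^2=-921$ and the numerator becomes $-177-48\sqrt{10}=-3(59+16\sqrt{10})$. This gives exactly $\frac{23}{921}(59+16\sqrt{10})$. The sign $B(x_1)<0$, which is all that case (2) uses, is unaffected.

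A smaller point concerns bounding the quartic $11x^4+90x^3+163x^2-120x-96$ on $[1,\infty)$. You say each monomial term is increasing there, which is false for $-120x$. The conclusion is still correct. For instance, the derivative $44x^3+270x^2+326x-120$ is positive for $x\ge 1$. Alternatively, $163x^2-120x\ge 43x\ge 43$ there, so the quartic is at least $11+90+43-96=48$. In case (1) you do not need this bound at all, since $AC<0$ already gives $\Delta>0$, as you note.
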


\begin{remark}
     In \cite{novikov2025}, the authors showed that the small-amplitude periodic traveling
waves of the Novikov Equation are modulational unstable if $k^2>3$.
 In the present paper, taking $b=3$, 
 then $g(k,b)=-2 k^6 - 10 k^4 + 16 k^2 + 96$. By Theorem 5.1, we can derive the conclusion of \cite{novikov2025}.

\end{remark}

\noindent{\bf Acknowledgements}

This work is supported by 
the National Natural Science Foundation of China (No.12571172, No.12475049) 
and by the Key Project of Education Department of Hunan Province (No.24A0667).\\


\begin{thebibliography}{99}

\bibitem{b-novikov2013} Y. Mi, C. Mu, On the Cauchy problem for the modified Novikov equation with peakon solutions, J. Differ. Equ. 254 (2013) 961–982.

\bibitem{novikov2009} V. Novikov, Generalizations of the Camassa–Holm equation, J. Phys. A 42 (2009) 342002.

\bibitem{ch1981} A. Fokas, B. Fuchssteiner, Symplectic structures, their Bäcklund transformation and hereditary symmetries, Phys. D 4 (1981) 47–66.

\bibitem{ch1993} R. Camassa, D. Holm, An integrable shallow water equation with peaked solitons, Phys. Rev. Lett. 71 (1993) 1661–1664.

\bibitem{dp1999} A. Degasperis, M. Procesi, Asymptotic integrability, in: Symmetry and perturbation theory, A. Degasperis, G. Gaeta (Eds.), World Scientific, Singapore, 1999, pp. 23–37.


\bibitem{Constantin2001PRS} A. Constantin, On the scattering problem for the Camassa-Holm equation, Proc. Roy. Soc. London Ser.
 A 457 (2001) 953-970.

\bibitem{Constantin2006IP} A. Constantin, V. Gerdjikov, R. Ivanov, Inverse scattering transform for the Camassa-Holm equation,
 Inverse Problems 22 (2006) 2197–2207.

\bibitem{Constantin2009ARMA}  A. Constantin, D. Lannes, The hydrodynamical relevance of the Camassa-Holm and Degasperis-Procesi equations, 
Arch. Ration. Mech. Anal. 192 (2009) 165–186.

\bibitem{Constantin1998AM} A. Constantin, J. Escher, Wave breaking for nonlinear nonlocal shallow water equations, Acta Math. 181 (1998) 229–243.

\bibitem{Constantin1998ASNSP} A. Constantin, J. Escher, Global existence and blow-up for a shallow water equation,
 Ann. Scuola Norm. Sup. Pisa 26 (1998) 303–328.

\bibitem{Hone etal2009} A. N. W. Hone, H. Lundmark, J. Szmigielski, Explicit multipeakon solutions of Novikov’s cubically nonlinear integrable Camassa-Holm equation, Dyn. Partial Differ. Equ. 6 (2009) 253–289.

\bibitem{A. Himonas and C. Holliman} A. A. Himonas, C. Holliman, The Cauchy Problem for the Novikov equation, Nonlinearity 25 (2012) 449–479.

\bibitem{Grayshan2013} K. Grayshan, Peakon solutions of the Novikov equation and properties of the data-to-solution map, J. Math. Anal. Appl. 397 (2013) 515–521.

\bibitem{yan etal2013} W. Yan, Y. Li, Y. Zhang, The Cauchy problem for the Novikov equation, Nonlinear Differ. Equ. Appl. (2013) 298–318.

\bibitem{Tiglay2011} F. Tiglay, The periodic Cauchy problem for Novikov’s equation, Int. Math. Res. Not. 20 (2011) 4633–4648.

\bibitem{Jiang2012} Z. Jiang, L. Ni, Blow-up phenomena for the integrable Novikov equation, J. Math. Anal. Appl. 385 (2012) 551–558.

\bibitem{wu2011} S. Wu, Z. Yin, Global weak solutions for the Novikov equation, J. Phys. A 44 (2011) 055202.

\bibitem{Palacios2020} J. M. Palacios, Asymptotic stability of peakons for the Novikov equation, J. Differ. Equ. 269 (2020) 7750–7791.


\bibitem{gkbch2013} K. Grayshan, A. A. Himonas, Equations with peakon traveling wave solutions, Adv. Dyn. Syst. Appl. 8 (2013) 217–232.

\bibitem{Zhou S2013} S. Zhou, R. Chen, A few remarks on the generalized Novikov equation, J. Inequal. Appl. 2013 (2013) 560.

\bibitem{Himonas AA2022} A. A. Himonas, C. Holliman, Instability and nonuniqueness for the b-Novikov equation, J. Nonlinear Sci. 32 (2022) 46.

\bibitem{da Silva PL2015} P. L. da Silva, I. L. Freire, On the group analysis of a modified Novikov equation, Springer Proc. Math. Stat. 117 (2015) 161–166.

\bibitem{Efstathiou2022} A. G. Efstathiou, E. N. Petropoulou, Peakon solutions of a b-Novikov equation, Appl. Math. Sci. Eng. 30 (2022) 541–553.

\bibitem{dengxijun2025} X. Deng, S. Lafortune, Spectral instability of peakons for the b-family of Novikov equations, J. Differ. Equ. 415 (2025) 572–588.

\bibitem{fanlili2025} L. Fan, X. Wang, R. Xu, Modulational instability in the b-family equation, Phys. D 481 (2025) 134817.

\bibitem{novikov2025} B. Ehrman, M. A. Johnson, S. Lafortune, Modulational instability of small amplitude periodic traveling waves in the Novikov equation, J. Math. Phys. 66 (2025) 091504.
    
\bibitem{26} M. A. Johnson, J. Oregero, Modulational stability of wave trains in the Camassa-Holm equation, J. Differ. Equ. 446 (2025) 113627.

\bibitem{16} T. B. Benjamin, Instability of periodic wavetrains in nonlinear dispersive systems, Proc. R. Soc. Lond. A 299 (1967) 59–76.

\bibitem{17} T. B. Benjamin, J. E. Feir, The disintegration of wave trains on deep water part 1, Theory, J. Fluid Mech. 27 (1967) 417–430.

\bibitem{18} T. J. Bridges, A. Mielke, A proof of the Benjamin-Feir instability, Arch. Ration. Mech. Anal. 133 (1995) 145–198.

\bibitem{19} H. Q. Nguyen, W. A. Strauss, Proof of modulational instability of Stokes waves in deep water, Comm. Pure Appl. Math. 76 (2023) 1035–1084.

\bibitem{20} M. Berti, A. Maspero, P. Ventura, Full description of Benjamin-Feir instability of Stokes waves in deep water, Invent. Math. 230 (2022) 651–711.

\bibitem{21} M. Berti, A. Maspero, P. Ventura, Benjamin-Feir instability of Stokes waves in finite depth, Arch. Ration. Mech. Anal. 247 (2023) 54.

\bibitem{22} M. Berti, A. Maspero, P. Ventura, Stokes waves at the critical depth are modulationally unstable, Comm. Math. Phys. 405 (2024) 67.

\bibitem{23} E. R. Tracy, H. H. Chen, Y. Lee, Study of quasiperiodic solutions of the nonlinear Schrödinger equation and the nonlinear modulational instability, Phys. Rev. Lett. 53 (1984) 218–221.

\bibitem{24} J. Chen, D. E. Pelinovsky, J. Upsal, Modulational instability of periodic standing waves in the derivative NLS equation, J. Nonlinear Sci. 31 (2021) 32.

\bibitem{25} J. C. Bronski, M. A. Johnson, The modulational instability for a generalized Korteweg-de Vries equation, Arch. Ration. Mech. Anal. 197 (2010) 357–400.

\bibitem{27} V. M. Hur, M. A. Johnson, Modulational instability in the Whitham equation for water waves, Stud. Appl. Math. 134 (2015) 120–143.

\bibitem{28} V. M. Hur, A. K. Pandey, Modulational instability in nonlinear nonlocal equations of regularized long wave type, Phys. D 325 (2016) 98–112.

\bibitem{29} M. Haragus, Stability of periodic waves for the generalized BBM equation, Rev. Roumaine Math. Pures Appl. 53 (2008) 445–463.

\bibitem{31} V. M. Hur, A. K. Pandey, Modulational instability in the full-dispersion Camassa-Holm equation, Proc. A 473 (2017) 20171053.

\bibitem{33} J. Jin, S. Liao, Z. Lin, Nonlinear modulational instability of dispersive PDE models, Arch. Ration. Mech. Anal. 231 (2019) 1487–1530.

\bibitem{34} A. Maspero, A. M. Radakovic, Full description of Benjamin-Feir instability for generalized Korteweg-de Vries equations, SIAM J. Math. Anal. 57 (2025) 3030–3070.

\end{thebibliography}
\end{document}